\newtheorem{lemma}{Lemma}
\newtheorem{theorem}{Theorem}
\newtheorem{proposition}{Proposition}
\newtheorem{claim}{Claim}
\newtheorem{cor}{Corollary}
\newtheorem{remark}{Remark}
\numberwithin{equation}{section}
\DeclareMathOperator{\sys}{sys}
\DeclareMathOperator{\arccosh}{arccosh}
\definecolor{mygreen}{RGB}{28,172,0} 
\definecolor{mylilas}{RGB}{170,55,241}
\begin{document}

\lstset{language=Matlab,
    breaklines=true,
    morekeywords={matlab2tikz},
    keywordstyle=\color{blue},
    morekeywords=[2]{1}, keywordstyle=[2]{\color{black}},
    identifierstyle=\color{black},
    stringstyle=\color{mylilas},
    commentstyle=\color{mygreen},
    showstringspaces=false,
    numbers=left,
    numberstyle={\tiny \color{black}},
    numbersep=9pt, 
	xleftmargin=1cm
}

\title{Systoles of hyperbolic surfaces with big cyclic symmetry}

\author{Sheng Bai}
\address{School of Mathematical Sciences, Peking University, Beijing 100871, CHINA}
\email{barries@163.com}

\author{Yue Gao}
\address{School of Mathematical Sciences, Peking University, Beijing 100871, CHINA}
\email{yue\_gao@pku.edu.cn}

\author{Shicheng Wang}
\address{School of Mathematical Sciences, Peking University, Beijing 100871, CHINA}
\email{wangsc@math.pku.edu.cn}
\date{}

\maketitle

\begin{abstract}
		
We obtain the exact values of the systoles of these hyperbolic surfaces of genus $g$ with cyclic symmetries of the maximum order and the next maximum order.  
Precisely: for genus $g$ hyperbolic surface with order $4g+2$ cyclic symmetry, the systole is $2\arccosh (1+\cos \frac{\pi}{2g+1}+\cos \frac{2\pi}{2g+1})$ when $g\ge 7$, and  for genus $g$ hyperbolic surface with order $4g$ cyclic symmetry, the systole is $2\arccosh (1+2\cos \frac{\pi}{2g})$ when $g\ge4$.
\end{abstract}

\tableofcontents

\section{Introduction}
In this note all surfaces are closed and orientable,  all symmetries on surfaces
are  orientation preserving. When we talk about symmetries on hyperbolic surfaces, we assume those symmeties are isometries. 

  Hyperbolic surface is a fundamental research object in mathematics with a quite long history. 
  Systole is an important topic in this research.
    Systole on a closed hyperbolic surface indicates either a shortest closed geodesic or its  length, and we often used for latter.
 For a survey on the study of the systole, see Parlier \cite{parlier2014simple}.

Below we just list some results which close to our result.
 F. Jenni \cite{jenni1984ersten} got the maximal systole of genus $2$ surfaces and C. Bavard \cite{bavard1992systole} got that of genus $2$ and $5$ hyperelliptic surfaces. P. Schmutz \cite{schmutz1993reimann} obtained the systole of some surfaces constructed from convex uniform polyhedra with genus 3, 4, 5, 11, 23, 59. 

In a rather different way, P. Buser and P. C. Sarnak (\cite{buser1994period} constructed closed hyperbolic surfaces whose systole has a near-optimal asymptotic behavior with respect to the genus of the surface by arithmetic methods. Later Katz, Schaps and Vishne \cite{katz2007logarithmic} found a family of surface with Hurwitz symmetry and with systole not smaller than $4/3\log g$.
 See \cite{petri2015graphs} and  \cite{petri2018hyperbolic}  for more recent examples.

Our work is partly inspired  by the work \cite{katz2007logarithmic}:
 Classical results claims that if  a finite group $G$ acts on  $\Sigma_g$, then $|G|\le 84(g-1)$ (A. Hurwitz, \cite{Hu}), and moreover 
 $|G|\le 4g+2$ if $G$ is cyclic (A. Wiman \cite{wiman1895ueber}). Call a topological/hyperbolic surface $\Sigma_g$ has Hurwitz symmetry,
 if it admits a finite group action of order $84(g-1)$ and has Wiman symmetry if it admits a cyclic finite group action of order $4g+2$.
 Note it is known for infinitely many $g$, topological surface $\Sigma_g$ has Hurwitz symmetry, and for every $g>1$, topological surface 
 $\Sigma_g$ has Wiman symmetry. Moreover it is also well known that the next biggest cyclic symmetry on topological surface $\Sigma_g$ has order $4g$ \cite{kulkarni1997riemann}.
 It is a classical result that each periodical map $f$ on $\Sigma_g$ can be realized as an isometry for some hyperbolic structure $\rho$ on $\Sigma_g$.
 
Our result is
	
\begin{theorem}\label{Main1} Suppose $\Sigma_g^1$ and $\Sigma_g^2$ are hyperbolic surfaces with cyclic symmetry of order $4g$
and $4g+2$ respectively. Then
			
		$$		\sys(\Sigma_g^1)=2\arccosh (1+2\cos \frac{\pi}{2g})\, \, \text{ for} \,\, {g\ge 4}$$

		$$		\sys(\Sigma_g^2) = 2\arccosh (1+\cos \frac{\pi}{2g+1}+\cos \frac{2\pi}{2g+1})\, \, \text{ for} \,\, {g\ge 7}$$
	
\end{theorem}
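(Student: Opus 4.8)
The plan is to exploit the rigidity forced by a cyclic action of such large order. By Riemann--Hurwitz, a cyclic group of order $4g+2$ on $\Sigma_g$ must have quotient orbifold the hyperbolic triangle orbifold $\mathbb{S}^2(2,2g+1,4g+2)$, while order $4g$ forces quotient $\mathbb{S}^2(2,4g,4g)$; since in each case the corresponding surjection of the triangle group onto $\mathbb{Z}_{4g+2}$ (resp.\ $\mathbb{Z}_{4g}$) is unique up to automorphism, each $\Sigma_g^i$ is determined up to isometry and is a regular cover of a \emph{rigid} triangle orbifold $\mathcal O$. First I would fix an explicit model: tile $\mathbb{H}^2$ by the relevant triangle (which has one right angle), take as fundamental domain for $\Sigma_g^i$ the $N$ copies of a fundamental domain (a kite) of $\mathcal O$ that cover it, with the symmetry $\phi$ permuting these $N$ kites cyclically about the (unique) preimage of the order-$N$ cone point; then use right-triangle hyperbolic trigonometry to record the relevant distances --- between the preimages of the cone points, and along the spokes separating consecutive kites.

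Second, I would pin down the systolic candidates. As $\langle\phi\rangle$ permutes the shortest geodesics and a shortest geodesic is simple, the systole is realized by a $\phi$-orbit of simple curves of one of finitely many combinatorial types; the natural candidates are the curves lying over the short closed geodesics and the short cone-point-to-cone-point arcs of $\mathcal O$ --- concretely, curves assembled from one or two spokes of the kite decomposition and their $\phi$-translates. For each candidate I would compute its length from the trace of the corresponding element of the Fuchsian uniformizing group --- a word of bounded length in the two rotation generators of the oriented triangle group --- via $\cosh(\ell/2)=\tfrac{1}{2}|\operatorname{tr}|$. I expect the minimal trace to collapse, through the Dirichlet-kernel identities $1+2\cos\theta=\sin\tfrac{3\theta}{2}/\sin\tfrac{\theta}{2}$ and $1+2\cos\theta+2\cos 2\theta=\sin\tfrac{5\theta}{2}/\sin\tfrac{\theta}{2}$, to exactly $2+4\cos\tfrac{\pi}{2g}$ for $\Sigma_g^1$ and $2+2\cos\tfrac{\pi}{2g+1}+2\cos\tfrac{2\pi}{2g+1}$ for $\Sigma_g^2$, yielding the upper bounds $\sys(\Sigma_g^1)\le 2\arccosh(1+2\cos\tfrac{\pi}{2g})$ and $\sys(\Sigma_g^2)\le 2\arccosh(1+\cos\tfrac{\pi}{2g+1}+\cos\tfrac{2\pi}{2g+1})$.

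The main obstacle is the matching lower bound, and here the plan is to push geodesics down to $\mathcal O$. A closed geodesic $\gamma$ of length $\ell$ on $\Sigma_g^i$ covers a closed geodesic or cone-point arc $\bar\gamma$ of $\mathcal O$ with $\ell=d\cdot\operatorname{length}(\bar\gamma)$ for some $d\mid N$, so $\gamma$ is short only when $\bar\gamma$ is short \emph{and} $d$ is small; decomposing $\mathcal O$ into its two triangles and bounding below the length of any geodesic segment crossing a triangle in terms of the triangle's sides, one sees that a geodesic meeting few triangles is long for metric reasons while one meeting many triangles is long because it is forced to wind. Equivalently: a closed geodesic shorter than the bound of Step 2 can meet only boundedly many kites of $\Sigma_g^i$, hence is freely homotopic to one of the listed candidates, and the collar lemma around those candidates disposes of the curves that hug them; assembling the estimates shows the candidates are genuinely shortest. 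The genus hypotheses $g\ge 4$ and $g\ge 7$ enter precisely here: for smaller $g$ a ``winding'' curve, or the curve over the competing short orbifold geodesic, beats the stated length, so the inequalities in the case analysis become strict only past those thresholds, and I would make the finitely many competing lengths explicit as functions of $g$ to locate the crossover.
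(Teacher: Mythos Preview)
Your proposal takes a genuinely different route from the paper. You work in the Fuchsian/triangle-group picture: identify candidates as words in the rotation generators, read off lengths from traces, then bound from below by pushing geodesics to the triangle orbifold and controlling how many triangles a short geodesic can cross. The paper instead fixes the classical regular $4g$-gon (resp.\ $(4g+2)$-gon) with opposite-edge identifications, exhibits an explicit simple geodesic $\gamma_i$ built from two segments joining midpoints of edges adjacent to a diameter, computes $|\gamma_i|$ by right-angled triangle formulae, and then proves the \emph{injective radius} equals $|\gamma_i|/2$ by showing that the open ball $B(x,h_i)$ embeds for every $x$ on a half-diameter. That last step is the bulk of the paper: lift $B(x,h_i)$ to $\mathbb H^2$, show it can only meet three specific edges of the tessellation, project back, and verify by a sequence of explicit distance inequalities (several checked numerically) that the resulting pieces do not overlap.

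What each approach buys: the paper's method is entirely elementary---no group theory beyond the quotient being $S^2(p,q,r)$, only planar trigonometry and the dual-polygon trick for $\Sigma_g^2$---at the cost of a long chain of case-by-case distance checks, some done by computer. Your approach is more conceptual and would, if carried out, give a uniform finite list of competing conjugacy classes to compare; it also explains more transparently \emph{why} the thresholds $g\ge 4$ and $g\ge 7$ appear (as crossovers between competing traces). The weak point of your plan is the lower bound: the assertion that a geodesic shorter than the target ``meets only boundedly many kites, hence is freely homotopic to one of the listed candidates, and the collar lemma disposes of the rest'' is exactly where the work lies, and as stated it is a heuristic rather than an argument---you would still need to enumerate the short hyperbolic conjugacy classes in the $(2,4g,4g)$ and $(2,2g+1,4g+2)$ triangle groups and check the relevant lifts close up in $\Sigma_g^i$, which is comparable in effort to the paper's disk-embedding checks.
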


We list some values of the systoles, see Table \ref{tab_value}. 
\begin{table}
		\centering
		\begin{tabular}{llll}
				Genus & $\sys(\Sigma^1_g)$ &Genus & $\sys(\Sigma^2_g)$ \\
				4 & 3.41464123 & 7 & 3.44730852 \\
				5 & 3.45497357 & 8 & 3.46473555 \\
				6 & 3.47667914 & 9 & 3.47691634 \\
				7 & 3.48969921 & 10 & 3.48576585
		\end{tabular}
		\caption{}
		\label{tab_value}
\end{table}

From the  proof of the theorem, we have 

\begin{cor}
In $\Sigma_g^1$, there are $2g$ closed geodesics having length  $\sys(\Sigma_g^1)$
and in $\Sigma_g^2$, there are $2g+1$ closed geodesics having length  $\sys(\Sigma_g^2)$.
\end{cor}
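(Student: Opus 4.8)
The plan is to read off the number of shortest closed geodesics from the explicit minimizer produced in the proof of Theorem~\ref{Main1}, to generate the remaining ones by applying the cyclic symmetry, and then to check that nothing else attains the systole length. Recall first that for $i=1,2$ the pair $(\Sigma_g^i,\phi_i)$ is rigid, where $\phi_i$ generates the cyclic symmetry of order $n_i$, with $n_1=4g$ and $n_2=4g+2$: the quotient $\Sigma_g^i/\langle\phi_i\rangle$ is a sphere with three cone points, a triangle orbifold whose Teichm\"uller space is a single point, so the hyperbolic surface and the isometric $\mathbb{Z}_{n_i}$-action on it are uniquely determined. Since $\phi_i$ acts by isometries, the finite set $\mathcal{S}_i$ of shortest closed geodesics of $\Sigma_g^i$ is $\langle\phi_i\rangle$-invariant, hence a disjoint union of $\langle\phi_i\rangle$-orbits, each of size dividing $n_i$. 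From the proof of Theorem~\ref{Main1} I would extract the specific simple closed geodesic $\gamma\in\mathcal{S}_i$ whose length is computed there, together with its position relative to a fundamental domain for $\langle\phi_i\rangle$ (equivalently, the location of its image in the quotient orbifold).

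Next I would determine the size of the orbit $\langle\phi_i\rangle\cdot\gamma$. The central element $\phi_i^{n_i/2}$ of order two is the hyperelliptic involution of $\Sigma_g^i$, and I expect the explicit description of $\gamma$ to make clear that $\phi_i^{n_i/2}(\gamma)=\gamma$, while the images $\gamma,\phi_i(\gamma),\dots,\phi_i^{n_i/2-1}(\gamma)$ are pairwise distinct --- one can separate them, for instance, by recording which cone-point neighbourhoods, or which sides of the fundamental polygon, each of them meets. Then the stabilizer of $\gamma$ in $\langle\phi_i\rangle$ is exactly $\langle\phi_i^{n_i/2}\rangle$, so $|\langle\phi_i\rangle\cdot\gamma|=n_i/2$, which equals $2g$ for $\Sigma_g^1$ and $2g+1$ for $\Sigma_g^2$. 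This already exhibits the claimed number of systoles.

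It then remains to show that $\mathcal{S}_i$ is this one orbit, i.e.\ that there is no systole outside $\langle\phi_i\rangle\cdot\gamma$. For this I would revisit the length lower bound in the proof of Theorem~\ref{Main1} --- which I expect to proceed by sorting closed geodesics according to a combinatorial type (the cyclic word of fundamental-domain sides they cross, equivalently certain conjugacy-class data in the orbifold group) and bounding the length of each type from below. The additional input required is that these bounds are \emph{strict} for every type other than that of $\gamma$, and that within $\gamma$'s type the bound is attained only by the members of $\langle\phi_i\rangle\cdot\gamma$.

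The main obstacle is precisely this last refinement. The proof of Theorem~\ref{Main1} needs only the inequalities, whereas the corollary needs all of their equality cases to be understood, so one has to go back through each estimate used to exclude a shorter geodesic and confirm that the underlying hyperbolic-trigonometric inequality is strict unless the geodesic is one of the $2g$ (respectively $2g+1$) listed systoles; in particular one must rule out a differently placed geodesic of the same combinatorial type as $\gamma$ having the same length, which I would expect to follow from strict monotonicity or convexity of the relevant length functions but has to be checked case by case.
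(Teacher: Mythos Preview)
Your orbit--stabilizer count is correct and is essentially the dual of what the paper does: the paper notes that every systole must cross some diameter of the fundamental polygon $P_i$ (the diameters cut $\Sigma_g^i$ into disks), proves that each diameter is crossed by a \emph{unique} systole, and then counts diameters ($2g$ in $P_1$, $2g+1$ in $P_2$). Your observation that $\phi_i^{n_i/2}$ (the $\pi$-rotation of $P_i$) preserves $\gamma_i$ while the smaller powers do not is exactly what makes this diameter count a bijection with the orbit $\langle\phi_i\rangle\cdot\gamma_i$.

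Where your plan diverges is in the uniqueness step, and your expectation about the proof of Theorem~\ref{Main1} does not match the paper. Theorem~\ref{Main1} is \emph{not} proved by sorting closed geodesics into combinatorial types and bounding each type; it is proved by showing directly that the ball $B(x,h_i)$ embeds for every $x$ on a diameter, via a case analysis of how the pieces $X,X_1,X_2,X_3,X_1'$ of the projected ball can overlap in the fundamental polygon (Lemmas~\ref{lem_xi_dia}--\ref{lem_xixj}). The uniqueness needed for the corollary then drops out of the \emph{equality case} of that same analysis: all of the separating inequalities are strict except the one governing $X_1\cap X_1'$, and the proof of Lemma~\ref{lem_x_b1c4} identifies the common perpendicular between $AA'$ and $B_1B_1'$ as the segment of length exactly $h_i$ with foot at $H$, so the tangency $\partial X_1\cap\partial X_1'\neq\emptyset$ occurs for $x\in OH$ if and only if $x=H$. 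That single observation gives ``$\gamma_i$ is the only systole meeting $AA'$'' at once.

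So your programme would work, but the ``revisit each estimate and check strictness across all combinatorial types'' step is, in the paper, replaced by one equality-case remark inside the injectivity-radius argument already in hand; there is no separate taxonomy of geodesic types to walk through.
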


The method to prove above result is rather direct, and seems  different from previous work we mentioned.

We first verify the hyperbolic structure of $\Sigma_g$ with isometries of big  order is unique
up to homeomorphism of $\Sigma_g$. Then we just pick a well known model of hyperbolic surface $\Sigma_{g}$ with  isometries of order $4g$ (respectively $4g+2$). In this $\Sigma_g$, we conjecture a simple closed geodesic $\gamma$ realizing the systole.
We calculate the length $2h$ of $\gamma$. Then we devote to the proof that the injective radius of $\Sigma_g$ is $h$. 

In Section 2 we  verify the uniqueness of hyperbolic structure of $\Sigma_g$ with isometries of big  order. 
In section 3, we give some trigonometric formulae which will be used later. 
Some formulae are copied from \cite{buser2010geometry} and some are derived by us. 
Theorem \ref{Main1} is proved in Section 4 and Section 5.

The paper is self-contained up to several standard text book.

{\bf Acknowledgement: } We thank Professor Ursula Hamenst\"adt for helpful communication. 
The authors are supported by grant
No.11711021 of the National Natural Science Foundation of China.

\section{Uniqueness of hyperbolic $\Sigma_g$ which admits big cyclic symmetry}

Let $S^2(p,q,r)$ denote the hyperbolic orbifold with base space $S^2$ and 3 singular points of index $p,q, r$ respectively,
where $p, q,r$ are three positive integers. 

\begin{proposition}\label{unique1} Let $G_i$ be a cyclic group action on hyperbolic surface $(\Sigma_g, \rho_i)$, $i=1,2,$ such that 

(1) those two actions are conjugated, 

(2) $\Sigma _g/G_i= S^2(p,q,r)$.

Then  those two hyperbolic metric $\rho_1$ and $\rho_2$ on $\Sigma_g$ are isometric.
		
\end{proposition}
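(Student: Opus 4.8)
The plan is to reduce everything to the rigidity of the triangle orbifold $S^2(p,q,r)$, whose Teichm\"uller space is a single point. First I would observe that since $G_i$ acts on $(\Sigma_g,\rho_i)$ by isometries, the quotient $(\Sigma_g,\rho_i)/G_i$ is a hyperbolic orbifold whose underlying space is $S^2$ with three cone points of angles $2\pi/p,\,2\pi/q,\,2\pi/r$; call this hyperbolic structure $\sigma_i$ on $\mathcal O:=S^2(p,q,r)$. The key classical input is that $\mathcal O$ carries a unique hyperbolic structure up to isometry: the dimension of its Teichm\"uller space is $2\cdot 3-6=0$, or concretely $(\mathcal O,\sigma_i)$ is the double of the hyperbolic triangle with angles $\pi/p,\pi/q,\pi/r$, which is unique up to isometry by the AAA congruence theorem in $\mathbb H^2$. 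Hence there is an isometry $(\mathcal O,\sigma_1)\to(\mathcal O,\sigma_2)$, and I may fix one model $(\mathcal O,\sigma)$ with orbifold fundamental group $\Gamma=\pi_1^{\mathrm{orb}}(\mathcal O)$ realized as an orientation-preserving Fuchsian triangle group, so that $\mathbb H^2/\Gamma=(\mathcal O,\sigma)$.

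Next I would describe each $(\Sigma_g,\rho_i)$ as an orbifold cover of $(\mathcal O,\sigma)$. The $G_i$-action corresponds to a surjection $\phi_i\colon\Gamma\to G_i$ with kernel $N_i\cong\pi_1(\Sigma_g)$, which is torsion free since $\Sigma_g$ is a manifold, and $(\Sigma_g,\rho_i)$ is isometric to $\mathbb H^2/N_i$ with the $G_i$-action realized as the deck group $\Gamma/N_i$. The hypothesis (1) that the two actions are conjugate provides a homeomorphism $h\colon\Sigma_g\to\Sigma_g$ and an isomorphism $\theta\colon G_1\to G_2$ intertwining the actions; then $h$ descends to an orbifold homeomorphism $\bar h$ of $\mathcal O$, and comparing the two short exact sequences $1\to\pi_1(\Sigma_g)\to\Gamma\xrightarrow{\phi_i}G_i\to1$ gives $\bar h_*(N_1)=N_2$ as subgroups of $\Gamma$, up to conjugacy in $\Gamma$ arising from the choice of basepoints.

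The last step, which I expect to be the only nontrivial one, is to upgrade the topological homeomorphism $\bar h$ to a geometric isometry. Here the rigidity of the first paragraph is used a second time: since $\mathcal T(\mathcal O)$ is a point, the pulled-back hyperbolic structure $\bar h^{*}\sigma$ is isotopic to $\sigma$, so $\bar h$ is isotopic to an isometry $\iota$ of $(\mathcal O,\sigma)$; in particular $\iota_{*}(N_1)$ is conjugate in $\Gamma$ to $\bar h_{*}(N_1)=N_2$. Lifting $\iota$ to $\mathbb H^2$ yields $\tilde\iota\in\mathrm{Isom}(\mathbb H^2)$ normalizing $\Gamma$, and after composing with a suitable element of $\Gamma$ I obtain $A\in\mathrm{Isom}(\mathbb H^2)$ with $A\,N_1\,A^{-1}=N_2$. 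Then $A$ descends to an isometry $\mathbb H^2/N_1\to\mathbb H^2/N_2$, that is, an isometry $(\Sigma_g,\rho_1)\to(\Sigma_g,\rho_2)$, as desired; whether $h$ (hence $A$) is orientation preserving is irrelevant here, since we only assert that $\rho_1$ and $\rho_2$ are isometric. Thus the whole difficulty is concentrated in the passage from the topological notion of conjugacy of actions to the geometric conclusion, and it is resolved precisely because the base two-orbifold $S^2(p,q,r)$ is rigid.
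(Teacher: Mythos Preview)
Your proof is correct and follows the same overall architecture as the paper's: descend to the quotient orbifold $S^2(p,q,r)$, invoke rigidity of the hyperbolic triangle orbifold, upgrade the induced orbifold homeomorphism to an isometry, and lift. The difference is in how that upgrade is carried out. The paper argues by hand: it takes the three geodesic arcs $l,m,n$ joining the cone points, isotopes $\bar\phi(l)$ to $l$ inside the disk $S^2\setminus\{r\}$, then $\bar\phi(m)$ to $m$, then $\bar\phi(n)$ to $n$, and finally uses that the two remaining triangles are contractible. You instead invoke that $\dim\mathcal T(S^2(p,q,r))=0$ to conclude $\bar h^{*}\sigma$ is isotopic to $\sigma$, and then phrase the lift in terms of normal subgroups of the Fuchsian triangle group. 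Your route is more conceptual and generalizes immediately to any rigid base orbifold; the paper's route is more elementary and self-contained, avoiding any appeal to orbifold Teichm\"uller theory. Both reach the same conclusion with no essential gap.
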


\begin{proof} For short, we write $\Sigma_g^i=(\Sigma_g, \rho_i)$. By (1) we have a homeomorphism $\phi:\Sigma_g^1
\to \Sigma_g^2$ such that $G_2=\phi G_1 \phi^{-1}$. By choosing the suitable generators $\sigma_1, \sigma_2$ of $G_1$ and $G_2$,
we may have $G_1=<\sigma_1>,$ $G_2=<\sigma_2>$, and $\sigma_2= \phi \sigma_1 \phi^{-1}$.
		$\sigma_1,\sigma_2$ are isometries on $\Sigma_g^1$ and $\Sigma_g^2$ respectively.  There is a commute diagram:

		\begin{equation}
				\xymatrix{
						\Sigma_g^1 \ar[d]^{\pi_1} \ar[r]^{\phi} & \Sigma_g^2 \ar[d]^{\pi_2} \\
						\Sigma_g^1/\sigma_1 \ar[r]^{\bar \phi} & \Sigma_g^2/\sigma_2
				}
				\label{for_diagram}
		\end{equation}

		$\pi_1$ and $\pi_2$ are the branch covers induced by $\sigma_1,\sigma_2$. $\phi$ and $\bar \phi$ are homeomorphisms between $\Sigma_g^1$ and $\Sigma_g^2$, $\Sigma_g^1/\sigma_1$ and $\Sigma_g^2/\sigma_2$ respectively. Both $\Sigma_g^1/\sigma_1$ and $\Sigma_g^2/\sigma_2$ are the hyperbolic orbifold $S^2(p,q,r)$. 

		What we need to prove is that there are isometries $\psi,\bar \psi$ between $\Sigma_g^1$ and $\Sigma_g^2$, $\Sigma_g^1/\sigma_1$ and $\Sigma_g^2/\sigma_2$ respectively, satisfying the commmute diagram (\ref{for_diagram_iso}). 

		The hyperbolic structure of the orbifold $S^2(p,q,r)$ is unique, Since it is obtained by doubling two hyperbolic
		triangles 
		$\Delta(p,q,r)$ of angles $\pi/p, \pi/q, \pi/r$, and the $\Delta(p,q,r)$ is unique.

		For a homeomorphism $\bar \phi$ on hyperbolic orbifold $S^2(p,q,r)$ that satisfies the diagram (\ref{for_diagram}), we are going to prove that $\bar \phi$ is isotopic to an isometry $\bar \psi$. 

		\begin{equation}
				\xymatrix{
						\Sigma_g^1 \ar[d]^{\pi_1} \ar[r]^{\psi} & \Sigma_g^2 \ar[d]^{\pi_2} \\
						\Sigma_g^1/\sigma_1 \ar[r]^{\bar \psi} & \Sigma_g^2/\sigma_2
				}
				\label{for_diagram_iso}
		\end{equation}

		For convenience, we just denote the singular points of the orbifold $S^2(p,q,r)$ by $p,q,r$ respectively. 

		We assume $l$ is the shortest geodesic connecting $p$ and $q$. Then $\bar \phi(l)$ is a curve connecting $\bar \phi(p)$ and $\bar \phi(q)$. By the definition of homeomorphism between orbifolds, $\bar \phi(p)$ and $p$ are singular points on the orbifold with the same order and $\bar \phi(q)$ and $q$ are singular points on the orbifold with the same order too. Without loss of generality, we assume that $\bar \phi(p)=p$ and $\bar \phi(q)=q$. 

		$|S^2(p,q,r)\backslash \{r\}|$ is an open  disk. By the contractability of disks, $l$ is isotopic to $\bar \phi(l)$ by an isotopy on $S^2(p,q,r)\backslash \{r\}$. So we may assume that $\bar \phi$ is already the identity on $l$.

		Similarly, consider the shortest geodesic $m$ between $p$ and $r$, $\bar\phi(m)$ is isotopic to $m$ in $S^2(p,q,r)\backslash l$. 

		Futhermore, the shortest geodesic $n$ between $q$ and $r$, $\bar\phi(n)$ is isotopic to $n$ in $S^2(p,q,r)\backslash (l\cup m)$. 

		Then there is a homeomorphism $\bar \psi$ on the orbifold isotopic to $\bar\phi$ and $\bar \psi |_{l\cup m \cup n}=id$. 

		Then since	$S^2(p,q,r)\backslash (l\cup m \cup n)$  consists of two congruent triangles, clearly the restriction of $\bar \psi$ on each triangle is isotopic to the identity, and we get that $\bar \phi$ is isotopic to the isometry $\bar \psi$. 
		
		$\bar \psi$ can be lifted to a homeomorphism ${\psi}$ between $\Sigma_g^1$ and $\Sigma_g^2$ that satisfies the diagram (\ref{for_diagram_iso}) and is isotopic to ${\phi}$. Since $\bar\psi\circ\pi_1$ and $\pi_2$ are local isometry, ${\psi}$ is a local isometry. Since ${\psi}$ is bijective, ${\psi}$ is an isometry.
\end{proof}

\begin{cor}\label{unique2}
The hyperbolic structure of surface $\Sigma_g$ with cyclic symmetry of order $k$ is unique for $k=4g+2$ and $4g$. 
\end{cor}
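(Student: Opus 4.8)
The plan is to deduce Corollary~\ref{unique2} from Proposition~\ref{unique1} by checking that the two hypotheses of the proposition are met in each of the two cases $k=4g+2$ and $k=4g$. The proposition says: if a cyclic group acts on two hyperbolic structures on $\Sigma_g$ so that (1) the two actions are conjugate (as actions on the topological surface) and (2) each quotient orbifold is a sphere with three cone points $S^2(p,q,r)$, then the two hyperbolic metrics are isometric. So the whole task reduces to two topological facts: that a cyclic action of order $4g+2$ (resp.\ $4g$) on $\Sigma_g$ is unique up to conjugacy, and that its quotient orbifold is of the form $S^2(p,q,r)$ with the appropriate cone data.

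First I would record the quotient data via Riemann--Hurwitz. If a cyclic group $\mathbb{Z}_k$ acts on $\Sigma_g$ with quotient orbifold of genus $h$ and cone points of orders $n_1,\dots,n_b$, then $2-2g = k\bigl(2-2h-\sum_{i=1}^b(1-1/n_i)\bigr)$, and each $n_i$ divides $k$. For $k=4g+2$ one is forced to $h=0$ and exactly three cone points with $(n_1,n_2,n_3)=(2,\,2g+1,\,4g+2)$; for $k=4g$ one is forced to $h=0$ and $(n_1,n_2,n_3)=(2,\,4g,\,4g)$ (these are the classical Wiman/Harvey computations, and I would just cite \cite{wiman1895ueber}, \cite{kulkarni1997riemann}, or \cite{harvey}). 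In particular in both cases the quotient is $S^2(p,q,r)$, so hypothesis (2) holds.

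Next I would establish hypothesis (1), uniqueness up to conjugacy of the action. By Nielsen realization together with the classification of finite cyclic actions, an action of $\mathbb{Z}_k$ on $\Sigma_g$ up to conjugacy (equivalently, up to an orientation-preserving homeomorphism intertwining the actions) is determined by its \emph{total branching data}: the genus of the quotient, the orders of the cone points, and the rotation numbers of the generator at the branch points (the ``signature'' together with the Nielsen invariants), and such an action is uniquely determined by a surjection $\pi_1^{orb}(S^2(p,q,r)) \to \mathbb{Z}_k$ up to automorphisms of source and target. For $S^2(p,q,r)$ the orbifold fundamental group is $\langle x,y,z \mid x^p=y^q=z^r=xyz=1\rangle$, and when $(p,q,r)$ are the triples above one checks directly that the surjection to $\mathbb{Z}_k$ sending $x,y,z$ to the appropriate generators of the cyclic subgroups is unique up to the action of $\mathrm{Aut}(\mathbb{Z}_k)$ and permutation of the cone points of equal order. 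Hence the action is unique up to conjugacy, giving hypothesis (1).

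With (1) and (2) verified, Proposition~\ref{unique1} applies directly and yields that any two hyperbolic structures on $\Sigma_g$ admitting a cyclic symmetry of order $4g+2$ (resp.\ $4g$) are isometric; moreover the relevant symmetry exists by the classical realization theorem for periodic maps as isometries, mentioned in the introduction. I expect the main obstacle to be the bookkeeping in the conjugacy step: one must be careful that ``the two actions are conjugate'' in the sense used by Proposition~\ref{unique1} follows from the uniqueness of the generating vector, i.e.\ that distinct-looking epimorphisms $\pi_1^{orb}\to\mathbb{Z}_k$ that differ by an automorphism really do give topologically conjugate actions (this is where one invokes the Nielsen--Kerckhoff machinery or an explicit mapping-class-group argument), and that no exceptional small-genus phenomenon intrudes — which is exactly why the genus bounds $g\ge 4$ and $g\ge 7$ appear elsewhere, though for the uniqueness statement itself one should check whether any lower-genus exceptions (e.g.\ competing signatures for small $g$) must be excluded here.
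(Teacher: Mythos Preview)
Your proposal is correct and follows essentially the same route as the paper: verify that the quotient orbifold is $S^2(p,q,r)$ with the stated cone data (the paper simply states $S^2(4g,4g,2)$ and $S^2(4g+2,2g+1,2)$ where you invoke Riemann--Hurwitz), cite uniqueness of the cyclic action up to conjugacy (the paper refers to \cite{kulkarni1997riemann} and \cite{guo2015embedding} where you sketch the Nielsen/generating-vector argument), and then apply Proposition~\ref{unique1}. Your closing worry about small-genus exceptions is unnecessary here---the uniqueness statement holds for all $g>1$; the bounds $g\ge4$ and $g\ge7$ in Theorem~\ref{Main1} arise only in the later systole estimates, not in this corollary.
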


\begin{proof} 
If $G$ is a cyclic group action of order $4g$ on the hyperbolic suraces $\Sigma_g$, then the orbifold $\Sigma_g/G$ is $S^2(4g, 4g, 2)$.

If $G$ is a cyclic group action of order $4g+2$ on the hyperbolic suraces $\Sigma_g$, then the orbifold $\Sigma_g/G$ is $S^2(4g+2, 2g+1, 2)$.

It is well-known that any two cyclic group action of order $k$ on $\Sigma_g$ is unique up to conjugacy, see  \cite{kulkarni1997riemann}, or quick argument in \cite{guo2015embedding}, where $k=4g$ or $4g+2$.

By Proposition \ref{unique1}, the corollary follows.
\end{proof}

\section{Trigonometric formulae}

Below are the trigonometric formulae we use in this work: 

A. $\cosh 2x=2\cosh^2 x-1 = 2\sinh^2 x+1$

Since 
\begin{eqnarray*}
		(e^x+e^{-x})^2 &=& e^{2x}+e^{-2x}+2 \\
		(2\cosh x)^2 &=& 2\cosh 2x + 2,
\end{eqnarray*}
 therefore 
\begin{equation}
\cosh 2x=2\cosh^2 x-1 = 2\sinh^2 x+1. 
		\label{for_2x}
\end{equation}

B. {Right angle triangle (Figure \ref{fig_right_angle_triangle}) \cite[p.454]{buser2010geometry}} 
\begin{eqnarray}
		\cosh c &=& \cosh a \cosh b \label{for_tri_1}.\\
		\sinh a &=& \sin \alpha \sinh c \label{for_tri_3}.\\
		\cos \alpha &=& \cosh a \sin \beta \label{for_tri_5}. 
\end{eqnarray}

C. {Cosine law and sine law of hyperbolic triangles (Figure \ref{fig_triangle_2}) \cite[p.454]{buser2010geometry}}

\begin{figure}[htbp]
\centering
\begin{minipage}[t]{0.3\textwidth}
\centering
\includegraphics{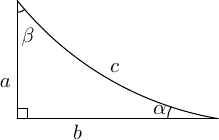}
\caption{}
\label{fig_right_angle_triangle}
\end{minipage}
\begin{minipage}[t]{0.3\textwidth}
\centering
\includegraphics{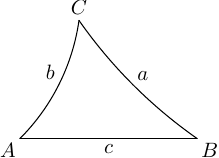}
\caption{}
\label{fig_triangle_2}
\end{minipage}

\begin{minipage}[t]{0.3\textwidth}
\centering
\includegraphics{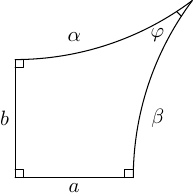}
\caption{}
\label{fig_trirectangle}
\end{minipage}
\begin{minipage}[t]{0.3\textwidth}
\centering
\includegraphics{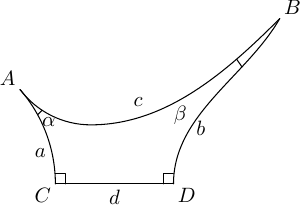}
\caption{}
\label{fig_2_right_angle_quadrilateral}
\end{minipage}

\end{figure}

\begin{eqnarray}
		\cosh c &=& -\sinh a \sinh b \cos C + \cosh a \cosh b\label{for_tri_cos}. \\
		\frac{\sinh a}{ \sin A} &=& \frac{\sinh b}{ \sin B}\label{for_tri_sin}. 
\end{eqnarray}

D. Trirectangle (Figure \ref{fig_trirectangle}) \cite[p.454]{buser2010geometry}
\begin{eqnarray}
		\cosh a &=& \cosh \alpha \sin \varphi \label{for_trirect_3}. \\
		\sinh \alpha &=& \sinh a \cosh \beta \label{for_trirect_5}.\\
		\sinh \alpha &=& \coth b\cot \varphi. \label{for_trirect_6}
\end{eqnarray}

E. {Quadrilateral with two right angles (Figure \ref{fig_2_right_angle_quadrilateral})}

\begin{eqnarray}
		\cosh c &=& \cosh d \cosh a \cosh b-\sinh a \sinh b \label{for_birect_1}.\\
		\cosh d &=& \sin \alpha \sin \beta \cosh c - \cos \alpha\cos\beta \label{for_birect_2}. 
\end{eqnarray}

Below are the proofs for the last  
two formulae:

\begin{proof}[Proof of (\ref{for_birect_1})] In Figure \ref{fig_2_right_angle_quadrilateral_2}, 
in $\triangle BCD$, we have $\cosh e = \cosh b \cosh d$ by (\ref{for_tri_1}), and $\sin\angle BCD = \sinh b/\sinh e$ by  (\ref{for_tri_3}).

\begin{figure}[htbp]
		\centering
		\includegraphics{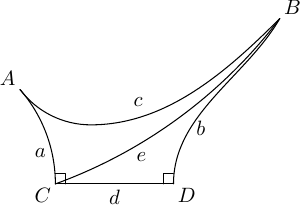}
		\caption{}
		\label{fig_2_right_angle_quadrilateral_2}
\end{figure}

Then in $\triangle ACB$, 
$\cosh c= -\sinh a \sinh e \cos \angle ACB + \cosh a \cosh e $ by  (\ref{for_tri_cos}). 
Since  $\angle BCD+\angle ACB=\pi/2$, we have  $\cos \angle ACB=\sin \angle BCD$. By (\ref{for_tri_3}) we have 
$\sin \angle BCD=\frac{\sinh b}{\sinh e}$. Plug  the second and third formula into the first formula, we obtain (\ref{for_birect_1}). \end{proof}

\begin{proof}[Proof of (\ref{for_birect_2})]
We put the quadrilateral in upper half plane. For points in the real line, we will talk their coordinates and additions as real numbers.

The quadrilateral $ACDB$ in Figure \ref{fig_2_right_angle_quadrilateral} is corresponding to the quadrilateral $T_1R_1R_2T_2$ in Figure \ref{fig_2_right_angle_quadrilateral_3}. In this quadrilateral, $\angle T_1=\alpha$, $\angle T_2=\beta$, $\angle R_1 = \angle R_2 = \pi/2$. The length of $T_1T_2=c$.  We hope to obtain the length of $R_1R_2$, (denoted $d$).
 
We assume the coordinate of $T_1$ to be $i$, $T_2$ to be $i e^c$. 
In Figure \ref{fig_2_right_angle_quadrilateral_3}, let $A, B$ be the centers of small and big half circles respectively, and $r, R$ be the  radius of small and big half circles respectively. By Euclidean trigonometry, in $\triangle AT_1O$, $\angle O =\pi/2$, $\angle A = \alpha$. In $\triangle BT_2O$, $OT_2=e^c$ and $\angle B=\beta$, so we have 
\begin{equation}
		A=\cot \alpha,\,\,  r=1/\sin \alpha, \,\,  B=-e^c \cot \beta\,\, R=e^c/\sin\beta, \label{equ_bisect}
\end{equation}

Respectively the coordinate of $C$, $D$, $E$, $F$ are $A-r$, $A+r$, $B-R$, $B+R$.

\begin{figure}[htbp]
		\centering
		\includegraphics{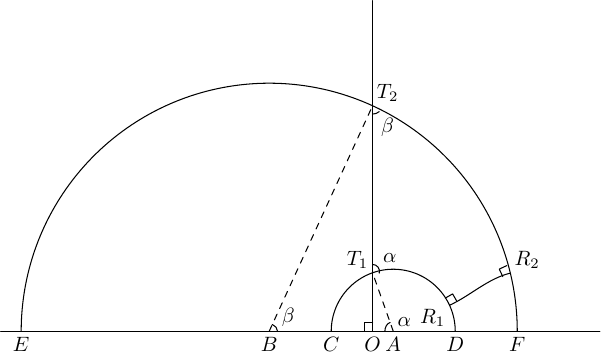}
		\caption{}
		\label{fig_2_right_angle_quadrilateral_3}
\end{figure}

To get $|R_1R_2|$ we need the isometry $\varphi$ of $\mathbb{H}^2$ below which sends $B-R$ to $\infty$
, where
\[
		\varphi:z\mapsto -1/(z-(B-R)). 
\]

Figure \ref{fig_2_right_angle_quadrilateral_4} is the image of Figure \ref{fig_2_right_angle_quadrilateral_3} under $\varphi$,
and  $E', F', C', D'$ are the images of $E,F, C, D$ under $\varphi$ respectively.
Their coordinates in Figure \ref{fig_2_right_angle_quadrilateral_4} are list below by the coordinates of their preimages in Figure \ref{fig_2_right_angle_quadrilateral_3}. 
\begin{equation}
		E' =\infty,\,\,
		F' = -\frac{1}{2R}, \,\,
		C' = \frac{-1}{A-r-(B-R)}, \,\,
		D' =  \frac{-1}{A+r-(B-R)}.  \label{equ_bisect_prime}
\end{equation}
We denote the center of the halfcircle as $P$. 

We figure out the formula of the length $|R_1'R_2'|  $:
\begin{equation}
		|R_1'R_2'| = \arccosh \frac{1}{\cos \angle R_1'F'R_2' }. 
		\label{for_birect_2_r1r2}
\end{equation}
 
In this proof, to avoid confusions, we denote the hyperbolic distance between two points $A$ and $B$ as $|AB|$, 
the Euclidean distance between them as $|AB|_e$. 

Since $\angle R_2'F'P = \angle PR_1'F' = \pi/2$, then $\angle R_1'F'R_2' = \angle R_1'P F'$. Thus 
\[
	\cos	\angle R_1'F'R_2' = \cos \angle R_1'P F' = \frac{|R_1'P|_e}{|PF'|_e}.
\]

The Euclidean distance $|R_1'P|_e$, $|PF'|_e$ is obtained directly by the coordinates of the points. Here $|PR_1'|_e = |PD'|_e = |C'D'|_e/2 $.
Therefore
\begin{eqnarray*}
		\cosh |R_1R_2| &=& \frac{1}{\cos \angle R_1'F'R_2'} 
		= \frac{1}{\cos \angle R_1'PF'} 
		= \frac{|F'P|_e}{|PR_1'|_e} \\
		&=&  \frac{-\frac{1}{2R}+(\frac{1}{A-r-(B-R)}+\frac{1}{A+r-(B-R)})/2}{(\frac{1}{A-r-(B-R)}-\frac{1}{A+r-(B-R)})/2}\,\,\,\, \,\,( \text{by pluging (\ref{equ_bisect_prime})})\\
		&=& \frac{2(A-B+R) - \frac{(A-B+R)^2-r^2}{R}}{2r} \\
		&=& \frac{2(A-B+R)R-(A-B+R)^2+r^2}{2Rr} \\
		&=& \frac{R^2-(A-B)^2+r^2}{2Rr} \\
		&=& \frac{e^{2c}\sin^2 \alpha - (\cos \alpha \sin \beta + e^c \sin \alpha \cos \beta)^2 + \sin^2 \beta}{2e^c \sin\alpha \sin \beta} \,\,\,\,\,\,(\text{by  pluging (\ref{equ_bisect})})\\
		&=& \frac{e^{2c}\sin^2 \alpha + \sin^2 \beta - (\cos^2 \alpha \sin^2 \beta + 2e^c \sin \alpha \cos \alpha \sin \beta \cos \beta + e^{2c} \cos^2 \beta \sin^2 \alpha) }{2e^c \sin\alpha \sin \beta} \\
		&=& \frac{(e^{2c}+1)\sin^2 \alpha\sin^2\beta - 2e^c \sin\alpha \cos \alpha \sin \beta \cos \beta }{2e^c \sin \alpha \sin \beta} \\
		&=& \cosh c \sin \alpha \sin \beta - \cos \alpha \cos \beta. 
\end{eqnarray*}

\begin{figure}[htbp]
		\centering
		\includegraphics{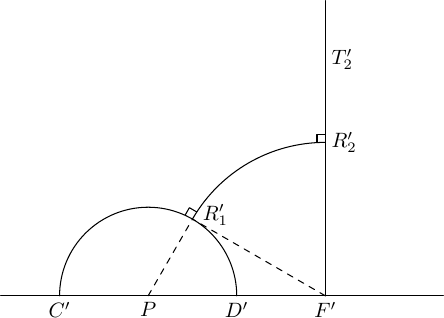}
		\caption{}
		
		\label{fig_2_right_angle_quadrilateral_4}
\end{figure}
\end{proof}

\section{Polygon modules of hyperbolic surfaces and candidates of systoles}

By Corollary \ref{unique2}, to prove Theorem \ref{Main1}, we need only to work on a concrete module of hyperbolic surface $\Sigma_g$ of given symmetry.

\subsection{Polygon modules}

(1) $4g$-cyclic symmmtry case:
The hyperbolic surface of genus $g$ with $4g$-cyclic symmetry can be obtained  by identifying  opposite edges the regular hyperbolic 4g-polygon of angle sum $2\pi$.  Below we denote this surface by $\Sigma^1_{g}$ and the polygon by $P_{1,g}$, and often $P_1$ for short.
Note all vertices in $P_{1}$ are identified to  one point in $\Sigma^1_{g}$. Each angle of $P_{1,g}$ is $2\pi/(4g)$.
We often view $P_{1}$ as a polygon contained either in $\Sigma^1_{g}$ or in hyperbolic plane $\mathbb{H}^2$. 

\begin{figure}[htbp]
\centering
\begin{minipage}[t]{0.45\textwidth}
\centering
\includegraphics{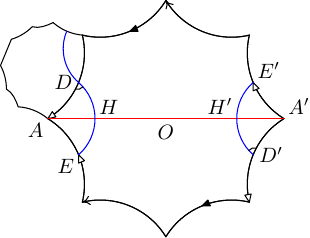}
\caption{}
\label{fig_4g_gon_4}
\end{minipage}
\end{figure}

For case of $4g+2$-cyclic symmetry we will pick two module.

(2) First polygon module of $4g+2$-cyclic case:

The hyperbolic surface of genus $g$ with $4g+2$-cyclic symmetry can be obtained  by identifying  opposite edges the regular hyperbolic 4g+2-polygon of angle sum $4\pi$.  Below we denote this surface by $\Sigma^2_{g}$ and the polygon by $P_{2,g}$, often $P_2$ for short. 
Note all vertices in $P_{2}$ are alternatively  identified to two points in $\Sigma^2_{g}$. Each angle of $P_{2}$ is $2\pi/(2g+1)$.
We often view $P_{2}$ as a polygon contained either in $\Sigma^2_{g}$ or in the hyperbolic plane $\mathbb{H}^2$ .

\begin{figure}[htbp]
\centering
\begin{minipage}[t]{0.45\textwidth}

		\centering
		\includegraphics{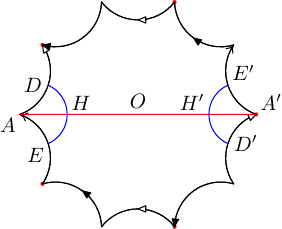}
		\caption{}
		\label{fig_4g_plus_2_gon_4}
\end{minipage}
\begin{minipage}[t]{0.45\textwidth}
		\centering
		\includegraphics{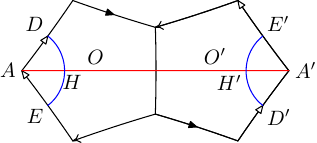}
		\caption{}
		\label{fig_4g_plus_2_gon_5_4}
\end{minipage}
\end{figure}

(3) Dual fundamental region, the second polygon module of $4g+2$ case. 

For regular hyperbolic $n$-gon $P_i\subset \mathbb{H}^2$, $i=1,2$, defined above, 
$D_i(P_i)$  give a tessellation of $\mathbb{H}^2$, where $D_i$ is the deck transformation group such that $\mathbb{H}^2/D_i=\Sigma^i_g$.
If two polygons in  $D_i(P_i)$ share one edge, connect their centers by the unique geodesic arc. The union of those arcs form a lattice,
which provide a new tesselation $D_i(P_i^*)$, where $P_i^*$ is also a fundamental region of $\Sigma_g^i$.

\begin{figure}[htbp]
		\centering
\begin{minipage}[t]{0.45\textwidth}
		\centering
		\includegraphics{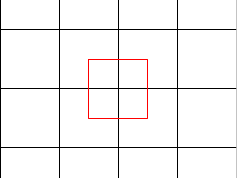}
		\caption{}
		\label{fig_dual_1}
\end{minipage}
\begin{minipage}[t]{0.45\textwidth}
		\centering
		\includegraphics{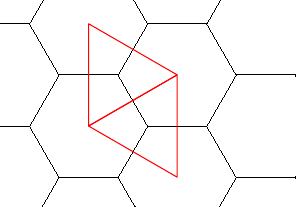}
		\caption{}
		\label{fig_dual_2}
\end{minipage}
\end{figure}

One can check directly that 

\begin{proposition}
(1) $P_1^*$ is congruent to $P_1$, that is $P_1^*$ is also a regular hyperbolic 
4g-polygon of angle sum $2\pi$. For each vertex $A$ of $P_1$, there is a unique $P_1^*$ centered at $A$.

(2) While $P_2^*$ is a union of two regular hyperbolic $2g+1$ polygons gluing along a pair of edges (see Figure \ref{fig_4g_plus_2_gon_5_4}),
and each angle of the regular hyperbolic $2g+1$ polygon is $\pi/(2g+1)$. For each vertex $A$ of $P_2$, there is a unique $P_2^*$ such that one of its regular $(2g+1)$-gon centered at $A$.
\end{proposition}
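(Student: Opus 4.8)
The plan is to work purely with the combinatorics of the regular tessellations $D_i(P_i)$ of $\mathbb{H}^2$ and of their duals, extracting everything from two rotational symmetries that are available at every vertex and at every tile--center.

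First I would record the local data of $D_i(P_i)$. Since $\Sigma_g^i$ is a smooth surface, the cone angle at each point that is an image of a vertex of $P_i$ is $2\pi$; as the interior angle of $P_1$ is $\pi/(2g)$ and that of $P_2$ is $2\pi/(2g+1)$, exactly $4g$ copies of $P_1$ meet at each vertex of $D_1(P_1)$ and exactly $2g+1$ copies of $P_2$ meet at each vertex of $D_2(P_2)$. Dually, the tiles incident to the center $O$ of a given tile are in bijection with the vertices of that tile, so $4g$ (resp. $4g+2$) dual cells meet at each vertex of the dual tessellation.

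Next I would pin down the relevant rotations. By Corollary \ref{unique2} the quotient orbifolds are $S^2(4g,4g,2)$ and $S^2(4g+2,2g+1,2)$, and one checks on the polygon models that the center of $P_1$ and the common image of the vertices of $P_1$ are both $4g$-fold fixed points, that the center of $P_2$ is a $(4g+2)$-fold fixed point, and that the two vertex classes of $P_2$ are each fixed by the index-two (order $2g+1$) subgroup. Each such fixed point lifts to a point of $\mathbb{H}^2$ about which a rotation of the corresponding order preserves $D_i(P_i)$, hence preserves the dual tessellation; this rotation acts freely, hence transitively, on the tiles surrounding the point. Consequently the vertices of the dual cell $P_1^*$ at a vertex $A$ of $P_1$ --- and of the dual cell $Q_A$ at a vertex $A$ of $P_2$ --- form a single orbit of a rotation by $2\pi/(4g)$ (resp. $2\pi/(2g+1)$) about $A$, so $P_1^*$ (resp. $Q_A$) is a regular $4g$-gon (resp. $(2g+1)$-gon) centered at $A$. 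Looking instead at a vertex $O$ of the dual cell, the rotation of order $4g$ (resp. $4g+2$) about $O$ cyclically permutes the $4g$ (resp. $4g+2$) dual cells that fill the angle $2\pi$ around $O$, so each of them --- in particular $P_1^*$, resp. $Q_A$ --- has interior angle $2\pi/(4g)=\pi/(2g)$ (resp. $2\pi/(4g+2)=\pi/(2g+1)$) at $O$, hence at every vertex. For (1) this is exactly the interior angle of $P_1$, so $P_1^*$ is congruent to $P_1$; for (2) it equals $\pi/(2g+1)$ as claimed.

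Finally I would identify the fundamental regions. The cell structure $P_i$ induces on $\Sigma_g^i$ has, for $P_1$, one vertex, $2g$ edges and one face, and for $P_2$ two vertices, $2g+1$ edges and one face, so the dual cell structure has one face on $\Sigma_g^1$ and two faces on $\Sigma_g^2$. Hence $P_1^*$ is itself a fundamental region, centered at the unique vertex of $D_1(P_1)$ lying over the single dual vertex class, and uniqueness in (1) is immediate since $D_1$ acts freely on vertices. On $\Sigma_g^2$ a fundamental region of the dual tessellation must consist of one dual cell of each vertex class; since the two endpoints of every edge of $D_2(P_2)$ lie in opposite classes, a dual cell of one class is adjacent only to dual cells of the other, so such a region is obtained by gluing one adjacent pair along their common edge --- this is $P_2^*$, the union of two regular $(2g+1)$-gons of Figure \ref{fig_4g_plus_2_gon_5_4}. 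Uniqueness in (2) follows because $D_2$ acts freely on vertices and the two vertex classes lie in distinct $D_2$-orbits, so each vertex $A$ of $P_2$ is the center of exactly one constituent $(2g+1)$-gon of exactly one $D_2$-translate of $P_2^*$. I expect the only real subtlety to be this last bookkeeping in case (2) --- recognizing that a single dual cell is merely ``half'' a fundamental region and that the two halves glue along a single edge --- while everything else follows mechanically from the two rotation observations above.
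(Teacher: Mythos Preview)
Your argument is correct. The paper itself gives no proof of this proposition --- it is prefaced only by the phrase ``One can check directly that'' and is followed immediately by a remark --- so there is no authorial argument to compare against; you have simply supplied the omitted verification.

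The route you take is the natural one: the two rotational symmetries (about tile centers and about tessellation vertices, lifted from the orbifold structure of Corollary~\ref{unique2}) simultaneously force the dual cells to be regular and pin down their interior angles, and the vertex/edge/face count on $\Sigma_g^i$ explains why a fundamental region of the dual tessellation is a single dual cell for $i=1$ but two glued dual cells for $i=2$. The one place where a reader ``checking directly'' might stumble --- recognizing that in the $(4g+2)$-case a single dual $(2g+1)$-gon has only half the required area, and that adjacent dual cells are always centered at vertices of opposite classes so that an adjacent pair really does form a fundamental domain --- is exactly the point you isolate and handle carefully.
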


\begin{remark}  (1) Figure \ref{fig_dual_1} and Figure \ref{fig_dual_2} are the genus one counterparts of $P_1 (P_1^*)$ and  $P_2 (P_2^*)$ respectively.

(2) $\Sigma^2_{g}$ is also obtained  by identifying  opposite edges 
the hyperbolic 4g-polygon $P^*_{2}$.
Note all vertices in $P^*_{2}$ are  identifed to one point in $\Sigma^2_{g}$.
To see the $4g+2$-cyclic symmetry on $P_2^*$ directly, see \cite{wang1991maximum}.
\end{remark}

\subsection{Candidates of systoles}

\label{subsect_cal}
In the regular polygons $P_i$, $i=1,2$,  we call the geodesic that connects two opposite vertices a \emph{diameter}. 
In $P_2^*$, call the geodesic connects two opposite vertices which is perpendicular to the common edge of two $2g+1$-regular polygons
a diameter.

Now we will use one polygon $P$ below to present $P_1$, $P_2$ and $P_2^*$.
where $O$ is the center of the whole $P$ when $P$ is either $P_1$ or $P_2$,  and is the center of one of two $2g+1$ polygon when 
$P=P_2^*$, $AA'$ is a chosen diameter of the polygon $P$. 
Let $D, E, D', E'$ be the mid-points of the corresponding edges neighboring the diameter.
Then by symmetry it is not difficult to observe that the geodesic segments $DE$ and  $E'D'$ form a closed geodesic $\gamma_i$ in $\Sigma_g^i$. 

We are going to calculate the length of $\gamma_i$, $i=1,2$.
Let $a$ be the angle between $OE$ and $OA$, and $b$ be the half of the angle at the vertex of the polygon.
Then  (see Figure \ref{fig_4g_plus_2_gon_10}). 
\begin{equation}
		a=b=\frac{\pi}{4g} \,\text{for}\, {P_1}; \,\, 2a=b=\frac{2\pi}{4g+2} \, \text{for}\,  P_2; \,\, a=2b=\frac{2\pi}{4g+2}, \text{for}\,  P_2^*. \label{for_ab}
\end{equation}

\begin{figure}[htbp]
		\centering
		\includegraphics{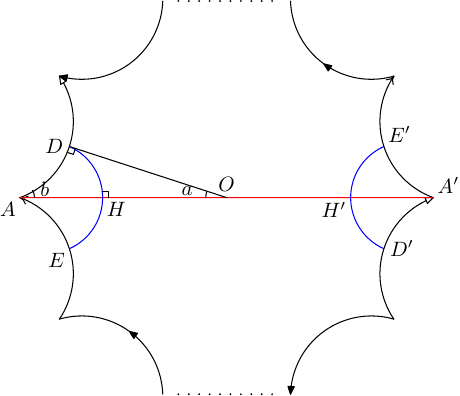}
		\caption{}
		\label{fig_4g_plus_2_gon_10}
\end{figure}

\begin{proposition}\label{length}
		In the polygons (see figure \ref{fig_4g_plus_2_gon_10}), 
		
		(1) $\cosh |OD| = \cos b /\sin a $, 
	 
		(2) $\cosh |AD| = \cos a / \sin b$, 
		
		(3) $\cosh |OA| = \cot a \cot b$,  

(4) $\sinh |DH| = \sqrt{ \cos^2 a - \sin^2 b }$,

(5) $\cosh |DE| =2\left( \cos^2 a - \sin^2 b \right) +1$,   

(6) $\cosh |AH| =\frac{ \cos a}{\sin b \sqrt{\cos^2 a+ \cos^2 b}}$, 

(7) $\sinh |AH|= \cot b \sqrt{\frac{\cos^2 a - \sin^2 b}{\cos^2 a + \cos^2 b}}$.

		In the surfaces the length of the geodesic $\gamma_i$ ($DEE'D'$) is $2\arccosh (1+\cos 2a+\cos 2b)$. 

		\label{prop_length_10}
\end{proposition}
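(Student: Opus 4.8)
The plan is to set up a single right triangle capturing the geometry near one edge of the polygon $P$ and then assemble the seven identities from it, with the final length formula following from (5) together with the half-angle identity $\cosh 2x = 2\cosh^2 x - 1$. Concretely, consider the triangle $OAD$ where $O$ is the center, $A$ is a vertex, and $D$ is the midpoint of an edge incident to $A$. By the regularity of $P$ and the choice of $D$ as a midpoint, the segment $OD$ meets the edge perpendicularly, so $\triangle ODA$ is a right triangle with the right angle at $D$; its angle at $O$ is $a$ and its angle at $A$ is $b$ (using the setup around \eqref{for_ab}). Now apply the right-triangle formulae from Section~3: formula \eqref{for_tri_5}, $\cos\alpha = \cosh a\sin\beta$, in the two ways (reading off the angle at $O$ or at $A$) gives (1) $\cosh|OD| = \cos b/\sin a$ and (2) $\cosh|AD| = \cos a/\sin b$ directly; then \eqref{for_tri_1}, $\cosh c = \cosh a\cosh b$, applied to the hypotenuse $OA$ gives (3) $\cosh|OA| = \cosh|OD|\cosh|AD| = \cot a\cot b$ after substituting (1) and (2).

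For (4) I would locate $H$ as the foot of the perpendicular from $D$ (equivalently, the point where the closed geodesic $\gamma_i$ — i.e. the segment $DE$ — meets the diameter $AA'$ or its relevant axis of symmetry); by symmetry $H$ is the midpoint of $DE$ and $DH \perp OA$ extended, so $\triangle ODH$ is right-angled at $H$ with hypotenuse $OD$. Using \eqref{for_tri_1} or \eqref{for_tri_3} together with value (1) for $\cosh|OD|$ and identifying the relevant angle, one solves for $\sinh|DH|$; the identity $\sinh^2 x = \cosh^2 x - 1$ turns $\cosh|OD| = \cos b/\sin a$ into an expression that, after the trig simplification $\cos^2 b/\sin^2 a - (\text{something})$, collapses to $\sqrt{\cos^2 a - \sin^2 b}$. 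Then (5) is immediate: $|DE| = 2|DH|$, so $\cosh|DE| = \cosh 2|DH| = 2\sinh^2|DH| + 1 = 2(\cos^2 a - \sin^2 b) + 1$ by \eqref{for_2x}. Finally, $\cosh|DE| = 2\cos^2 a - 2\sin^2 b + 1 = (1 + \cos 2a) - (1 - \cos 2b) + 1 = 1 + \cos 2a + \cos 2b$, and since $\gamma_i$ consists of two copies of a segment congruent to $DE$ (namely $DE$ and $E'D'$), its length is $2\arccosh(1 + \cos 2a + \cos 2b)$, as claimed.

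For (6) and (7) I would work in the right triangle $\triangle ADH$ (right angle at $H$, hypotenuse $AD$, one leg $DH$ already computed, the other leg $AH$): \eqref{for_tri_1} gives $\cosh|AD| = \cosh|AH|\cosh|DH|$, so $\cosh|AH| = \cosh|AD|/\cosh|DH| = (\cos a/\sin b)/\sqrt{1 + \cos^2 a - \sin^2 b}$, and $1 + \cos^2 a - \sin^2 b = \cos^2 a + (1 - \sin^2 b) = \cos^2 a + \cos^2 b$, yielding (6). For (7), use $\sinh^2|AH| = \cosh^2|AH| - 1$ and simplify, or more cleanly apply \eqref{for_tri_3} in the form $\sinh|AH| = \sin(\angle ADH)\sinh|AD|$ — but the quickest route is probably $\sinh|AH| = \sinh|DH|\coth(\angle\text{at }A\text{ in }\triangle ADH)$ via \eqref{for_tri_5}, or simply $\sinh^2|AH| = \cosh^2|AH| - 1 = \frac{\cos^2 a}{\sin^2 b(\cos^2 a + \cos^2 b)} - 1 = \frac{\cos^2 a - \sin^2 b(\cos^2 a + \cos^2 b)}{\sin^2 b(\cos^2 a + \cos^2 b)}$, and the numerator factors as $\cos^2 a(1 - \sin^2 b) - \sin^2 b\cos^2 b = \cos^2 b(\cos^2 a - \sin^2 b)$, giving $\sinh|AH| = \cot b\sqrt{(\cos^2 a - \sin^2 b)/(\cos^2 a + \cos^2 b)}$, which is (7).

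The only genuine subtlety — the main obstacle — is verifying the \emph{geometric} claims underlying the picture in Figure~\ref{fig_4g_plus_2_gon_10}: that $OD$ is perpendicular to the edge at its midpoint $D$, that $DH$ (part of $\gamma_i$) is perpendicular to $OA$, that $H$ is the midpoint of $DE$, and that the angle at $O$ in $\triangle ODH$ equals $a$ (resp. the angle at $A$ in $\triangle ADH$ relates correctly to $b$). These all follow from the reflective symmetries of the regular polygon $P$ (reflection across $OA$ swaps the two edges meeting at $A$ and hence fixes their configuration; reflection across the perpendicular bisector of the edge fixes $D$ and swaps $O$ with the center of the adjacent polygon), together with the fact — asserted in the paragraph before the proposition — that $DE \cup E'D'$ closes up to a geodesic, which forces the meeting angles at $H$ to be right. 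Once these incidences and right angles are justified, everything else is a mechanical unwinding of the Section~3 formulae plus the elementary trig identities $\cos 2x = 1 - 2\sin^2 x = 2\cos^2 x - 1$, so I would state the symmetry justifications carefully and then present the computations (1)–(7) in the dependency order above, ending with the length of $\gamma_i$.
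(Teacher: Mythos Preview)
Your proposal is correct and follows essentially the same route as the paper: set up the right triangle $\triangle OAD$ (right angle at $D$, angles $a$ at $O$ and $b$ at $A$), read off (1)--(3) from \eqref{for_tri_5} and \eqref{for_tri_1}, then pass to the right triangle at $H$ to get (4)--(7), and finish with the double-angle identity for $|\gamma_i| = 2|DE|$. The one minor divergence is that for (4) you propose working in $\triangle ODH$ (angle $a$ at $O$), whereas the paper works in $\triangle ADH$ (angle $b$ at $A$) and applies \eqref{for_tri_3} directly as $\sinh|DH| = \sinh|AD|\sin b = \sqrt{(\cos a/\sin b)^2 - 1}\,\sin b = \sqrt{\cos^2 a - \sin^2 b}$; your route also works (it gives $\sqrt{\cos^2 b - \sin^2 a}$, which equals the target by $\cos^2 b - \sin^2 a = \cos^2 a - \sin^2 b$), but the paper's choice avoids that extra identity and feeds more cleanly into (6)--(7) since you are already sitting in $\triangle ADH$.
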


\begin{proof}
		In Figure \ref{fig_4g_plus_2_gon_10}, in the right-angle triangle $\triangle OAD$, $\angle ODA = \pi/2$, $\angle AOD = a$ and $\angle ODA = b$. 
		Then by  (\ref{for_tri_5}), \[\cosh |OD| = \frac{\cos \angle OAD}{\sin \angle AOD} = \frac{\cos b}{\sin a},\] and \[\cosh |AD| = \frac{\cos \angle AOD}{\sin \angle DAO} = \frac{\cos a}{\sin b }.\] By  (\ref{for_tri_1}), \[\cosh |OA| = \cosh |OD| \cosh |AD| = \cot a \cot b.\] 
		In the right-angled triangle $\triangle ADH$, $\angle DHA = \pi/2$, $\angle DAH = b$ and $\cosh |AD| = \cos a/\sin b $. Then by  (\ref{for_tri_3}), 
		\begin{eqnarray*}
			\sinh |DH| &=& \sinh |AD| \sin \angle DAH 	\\
			&=& \sqrt{ \left( \frac{\cos a}{\sin b } \right)^2-1 } \sin b\\
			&=& \sqrt{ \cos^2 a- \sin^2 b }.
		\end{eqnarray*} 
Therefore, 
\begin{eqnarray*}
		\cosh |DE| &=& 2\sinh^2 |DH| +1 \\
		&=& 2\left( \cos^2 a - \sin^2 b \right) +1 \\
		&=& \cos 2a + \cos 2b + 1. 
\end{eqnarray*}

		Then 
		$\cosh |DH| = \sqrt{\sinh^2 |DH| +1} = \sqrt{\cos^2 a + \cos^2 b}$ and by  (\ref{for_tri_1}), 
		\begin{eqnarray*}
				\cosh |AH| &=& \cosh|AD|/\cosh |DH|\\ &=&\frac{ \cos a}{\sin b \sqrt{\cos^2 a + \cos^2 b} } .
		\end{eqnarray*} 

		\begin{eqnarray*}
				\sinh |AH| &=& \sqrt{\cosh^2 |DH| -1} \\
				&=& 
				\cot b \sqrt{\frac{\cos^2 a - \sin^2 b}{\cos^2 a + \cos^2 b}}.
		\end{eqnarray*}

		The length of the geodesic $\gamma_i$ ($DEE'D'$) is equal to $2|DE|$. Thus by (5) we have
		\begin{equation}
				|\gamma_i| = 2\arccosh (1+\cos 2a+\cos 2b). \label{for_gamma_i}
		\end{equation}
\end{proof}

Let $2h_i$ be the length of $\gamma_i$,  by plugging (\ref{for_ab}) into (\ref{for_gamma_i}) we get  

\begin{cor}
$$h_1=\arccosh (\cos \pi/2g+1)\,  \text{and}\,  h_2=\arccosh (1+\cos \frac{\pi}{2g+1}+\cos \frac{2\pi}{2g+1})$$
\end{cor}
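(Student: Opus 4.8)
The plan is to compute each of the seven quantities by repeatedly feeding the right-triangle formulas (\ref{for_tri_1}), (\ref{for_tri_3}), (\ref{for_tri_5}) and the double-angle identity (\ref{for_2x}) into the triangulation of the polygon $P$ that is visible in Figure \ref{fig_4g_plus_2_gon_10}. The key observation is that all the relevant points lie in a single right triangle $\triangle OAD$ with the right angle at $D$, together with the foot $H$ of the perpendicular from $D$ to $OA$, so the whole computation reduces to understanding two nested right triangles, $\triangle OAD$ and $\triangle ADH$, plus the reflection symmetry across $OA$ that turns $DH$ into half of $DE$.

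The steps, in order: \emph{(i)} In $\triangle OAD$ we have $\angle ODA=\pi/2$, $\angle AOD=a$, $\angle DAO=b$; applying (\ref{for_tri_5}) in each of the two ways gives $\cosh|OD|=\cos b/\sin a$ and $\cosh|AD|=\cos a/\sin b$, which are (1) and (2). \emph{(ii)} Multiplying these and using (\ref{for_tri_1}) (i.e.\ $\cosh|OA|=\cosh|OD|\cosh|AD|$) gives (3). \emph{(iii)} In $\triangle ADH$ we have $\angle DHA=\pi/2$, $\angle DAH=b$ (this is the same angle $b$ since $H$ lies on $OA$), and $\cosh|AD|$ already known from (2); then (\ref{for_tri_3}) gives $\sinh|DH|=\sinh|AD|\sin b=\sqrt{(\cos a/\sin b)^2-1}\,\sin b=\sqrt{\cos^2 a-\sin^2 b}$, which is (4). \emph{(iv)} Since $E$ is the reflection of $D$ across the line $OA$, $H$ is the midpoint of $DE$, so $|DE|=2|DH|$ and (\ref{for_2x}) yields $\cosh|DE|=2\sinh^2|DH|+1=2(\cos^2 a-\sin^2 b)+1$; the trigonometric rewriting $2\cos^2 a-1=\cos 2a$ and $1-2\sin^2 b=\cos 2b$ (again (\ref{for_2x}), with the hyperbolic functions replaced by ordinary ones) turns this into $\cos 2a+\cos 2b+1$, giving (5). \emph{(v)} From (4), $\cosh|DH|=\sqrt{\sinh^2|DH|+1}=\sqrt{\cos^2 a+\cos^2 b}$; then (\ref{for_tri_1}) applied to $\triangle ADH$ (namely $\cosh|AD|=\cosh|AH|\cosh|DH|$) gives (6), and $\sinh|AH|=\sqrt{\cosh^2|AH|-1}$ — or more cleanly $\sinh|AH|=\sqrt{\cosh^2|DH|-1}\,\big/\text{(something)}$, but the direct route is to simplify $\sqrt{\cosh^2|AH|-1}$ — gives (7) after algebra. \emph{(vi)} Finally, the closed geodesic $\gamma_i=DEE'D'$ consists of the two segments $DE$ and $E'D'$, which by the symmetry of the polygon have equal length, so $|\gamma_i|=2|DE|=2\arccosh(1+\cos 2a+\cos 2b)$, which is (\ref{for_gamma_i}).

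There is essentially no serious obstacle here: every step is a direct substitution into a textbook formula, and the only things to be careful about are bookkeeping of which angle plays which role in each of the two right triangles (in particular that $\angle DAH=\angle DAO=b$ because $H\in OA$), the sign of the square roots (all quantities are lengths, hence nonnegative, and one should note $\cos^2 a-\sin^2 b\ge 0$ in the relevant range so the radicands make sense), and the purely algebraic simplification in (7). The one conceptual point worth stating explicitly is why $DEE'D'$ closes up to a geodesic in $\Sigma_g^i$: the segments $DE$ and $D'E'$ are each perpendicular to the diameter $AA'$ at their midpoints by the reflection symmetry across $AA'$, and the edge identifications of the polygon match the endpoints up, so the broken path has no corners after identification — this is the sense in which "it is not difficult to observe" that $\gamma_i$ is a closed geodesic, and it should be spelled out at least briefly. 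The passage from $|\gamma_i|$ to the formula for $h_i$ in the corollary is then just substituting the values of $a,b$ from (\ref{for_ab}): for $P_1$, $2a=2b=\pi/2g$ so $\cos 2a+\cos 2b=2\cos(\pi/2g)$ hmm — but note the corollary writes $h_1$ in terms of a single cosine, so one takes $a=b=\pi/4g$, giving $1+\cos 2a+\cos 2b=1+2\cos(\pi/2g)$ and $h_1=\arccosh(1+2\cos\frac{\pi}{2g})$ (the excerpt's "$\cos\pi/2g+1$" being a typo for $1+2\cos\frac{\pi}{2g}$, consistent with Theorem \ref{Main1}); for $P_2$, $2a=b=2\pi/(4g+2)$ gives $\cos 2a=\cos\frac{2\pi}{2g+1}$ and $\cos 2b=\cos\frac{4\pi}{2g+1}$ — wait, one should instead use $2a = 2\pi/(4g+2)=\pi/(2g+1)$ so $\cos 2a = \cos\frac{\pi}{2g+1}$ and $2b = 4\pi/(4g+2) = 2\pi/(2g+1)$ so $\cos 2b=\cos\frac{2\pi}{2g+1}$, yielding $h_2=\arccosh(1+\cos\frac{\pi}{2g+1}+\cos\frac{2\pi}{2g+1})$, matching the stated corollary.
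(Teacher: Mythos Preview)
Your proposal is correct and follows essentially the same route as the paper: you reprove Proposition~\ref{prop_length_10} by the same sequence of right-triangle formulas in $\triangle OAD$ and $\triangle ADH$ (items (i)--(v) match the paper's derivation line for line, including using (\ref{for_tri_5}) for (1)--(2), (\ref{for_tri_1}) for (3) and (6), (\ref{for_tri_3}) for (4), and (\ref{for_2x}) for (5)), and then obtain the Corollary by the same substitution of (\ref{for_ab}) into (\ref{for_gamma_i}). You also correctly flag that the printed $h_1=\arccosh(\cos\pi/2g+1)$ is a typo for $\arccosh(1+2\cos\frac{\pi}{2g})$, consistent with Theorem~\ref{Main1}.
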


\section{Injective radious $h_i$}

\subsection{Some reductions}

Now we begin to prove $2h_i$ is the systole of the hyperbolic surface $\Sigma^i_g$, $i=1,2$. 
By definitions and Proposition \ref{length}, we have the following 

\begin{claim}
$B(x,h_i)\to \Sigma^i_g$ is an embedding  for each $x\in \Sigma_g^i$ implies that $\sys(\Sigma^i_g)=2h_i$, $i=1,2$.  
		\label{claim_origin}
\end{claim}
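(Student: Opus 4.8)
The plan is to prove the contrapositive in the natural direction and use the claim as stated: assuming that the ball $B(x,h_i)$ embeds for every $x\in\Sigma_g^i$, we must conclude $\sys(\Sigma_g^i)=2h_i$. First I would recall that the injectivity radius function $x\mapsto\operatorname{inj}(x)$ on a closed hyperbolic surface is continuous, attains a minimum, and that $2\min_x\operatorname{inj}(x)$ equals the length of the shortest closed geodesic, i.e. the systole: indeed if $\operatorname{inj}(x)=r$ then there is a geodesic loop based at $x$ of length $2r$, which can be tightened to a closed geodesic of length $\le 2r$, while conversely any closed geodesic of length $\ell$ forces $\operatorname{inj}(x)\le\ell/2$ at points on it. Hence the hypothesis ``$B(x,h_i)\hookrightarrow\Sigma_g^i$ for all $x$'' says precisely $\operatorname{inj}(x)\ge h_i$ everywhere, so $\sys(\Sigma_g^i)\ge 2h_i$.

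For the reverse inequality I would invoke Proposition~\ref{length} (equivalently the corollary computing $h_i$): the curve $\gamma_i=DEE'D'$ constructed in Section~4 is an honest closed geodesic in $\Sigma_g^i$ of length $2h_i$. A shortest closed geodesic therefore has length at most $2h_i$, giving $\sys(\Sigma_g^i)\le 2h_i$. Combining the two inequalities yields $\sys(\Sigma_g^i)=2h_i$, which is exactly the conclusion of the claim.

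The only subtle points, and where I would spend the bulk of the argument, are the two standard facts being quoted: that $\gamma_i$ really is a smooth closed geodesic (not just a piecewise geodesic) — this needs the angle condition at the points $D,E,D',E'$ coming from the symmetry of the polygon and the edge identifications, which is asserted ``by symmetry'' in the paragraph before Proposition~\ref{prop_length_10} and should be spelled out — and the equivalence between ``all metric balls of radius $h_i$ embed'' and ``the injectivity radius is at least $h_i$.'' The latter is where one must be careful on a closed surface: an embedded ball of radius $r$ about $x$ certifies $\operatorname{inj}(x)\ge r$, and conversely $\operatorname{inj}(x)\ge r$ gives an embedded open ball of radius $r$; no curvature or topological obstruction intervenes because $\Sigma_g^i$ is complete with pinched (constant) negative curvature, so there are no conjugate points and the exponential map is a local diffeomorphism everywhere. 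I expect the genuinely hard work of the paper — establishing that $\operatorname{inj}(x)\ge h_i$ for all $x$, i.e. verifying the hypothesis of the claim — to be deferred to the subsequent subsections; the claim itself is just the bookkeeping lemma that reduces the systole computation to that injectivity-radius estimate, and its proof is the short two-step argument above.
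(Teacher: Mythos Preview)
Your proposal is correct and matches the paper's approach. The paper does not actually write out a proof of this claim; it simply says ``By definitions and Proposition~\ref{length}, we have the following'' and states the claim, which is precisely your two-step argument: the embedding hypothesis gives $\sys(\Sigma_g^i)\ge 2h_i$ by the standard injectivity-radius/systole relation, and Proposition~\ref{length} supplies the closed geodesic $\gamma_i$ of length $2h_i$ for the reverse inequality.
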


\begin{figure}[htbp]
\centering
\begin{minipage}[t]{0.45\textwidth}
\centering
\includegraphics{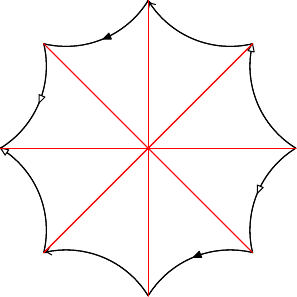}
\caption{}
\label{fig_4g_gon_9}
\end{minipage}
\end{figure}

We view $P_i$ as a polygon contained in $\Sigma^i_g$. 
There is an observation that every closed essential geodesic in $\Sigma_g^i$ 
must meet a diameter of $P_i$, since the diameters of $P_i$ cut $\Sigma_g^i$ into contractible pieces (see Figure \ref{fig_4g_gon_9}).
In particular each closed shortest geodesic meets a diameter of $P_i$.
Note that  all points  in closed  shortest geodesics have the same injective radious (see page \cite[p.178]{petersen2006riemannian}). 

Therefore we can reduce Claim \ref{claim_origin} to the following: 

\begin{claim}
$B(x,h_i)\to \Sigma^i_g$ is an embedding	 for each $x\in AA'$ implies that $\sys(\Sigma^i_g)=2h_i$, , $i=1,2$.  
		
		\label{claim_med}	
\end{claim}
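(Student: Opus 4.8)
The plan is to reduce the assertion of Claim \ref{claim_med} to that of Claim \ref{claim_origin} by exploiting the symmetry of $\Sigma^i_g$ together with the structure of the diameters of $P_i$. First I would make precise the observation already stated in the text: the diameters of $P_i$ (there are $2g$ of them for $P_1$, and correspondingly many for the $P_2$, $P_2^*$ models), being geodesic arcs through the center $O$ joining opposite vertices, cut the surface $\Sigma^i_g$ into simply connected pieces. Concretely, in $\Sigma^i_g$ the union of all diameters is a $1$-complex whose complement is a disjoint union of open disks (the ``triangular'' or ``quadrilateral'' cells visible in Figure \ref{fig_4g_gon_9}); I would verify this by an Euler-characteristic count, or simply by noting that each complementary region lifts homeomorphically to a convex subpolygon of $\mathbb{H}^2$. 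Consequently any essential closed curve in $\Sigma^i_g$ must intersect the union of diameters, and after homotoping it to the shortest geodesic in its free homotopy class the intersection persists; hence every closed shortest geodesic meets at least one diameter.

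The second ingredient is the fact, quoted from \cite[p.178]{petersen2006riemannian}, that the injectivity radius (equivalently, the function $x \mapsto \tfrac12\mathrm{dist}$ to the first self-intersection of geodesics from $x$) is constant along any closed geodesic realizing the systole: at every point $x$ of such a geodesic the embedding radius of the metric ball equals exactly half the systole length. Combining this with the previous paragraph: if $\gamma$ is a shortest closed geodesic, pick a point $x_0 \in \gamma$ that also lies on some diameter of $P_i$; then the embedding radius at $x_0$ is $\tfrac12\sys(\Sigma^i_g)$. Therefore, to conclude $\sys(\Sigma^i_g) = 2h_i$ via Claim \ref{claim_origin}, it suffices to know that $B(x,h_i) \to \Sigma^i_g$ is an embedding for every $x$ lying on a diameter, and by the transitive action of the cyclic symmetry on the set of diameters it is enough to check this for the single chosen diameter $AA'$. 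This is exactly the content of Claim \ref{claim_med}.

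I would assemble these into a short formal argument: assume the hypothesis of Claim \ref{claim_med}, i.e. $B(x,h_i)\hookrightarrow\Sigma^i_g$ for all $x\in AA'$. By the $4g$- (resp. $4g+2$-) cyclic symmetry, which is an isometry group permuting the diameters transitively and carrying $AA'$ onto every other diameter, we get $B(x,h_i)\hookrightarrow\Sigma^i_g$ for every $x$ on every diameter of $P_i$. Now let $\delta$ be a shortest closed geodesic; by the cutting observation $\delta$ meets some diameter at a point $x_0$, and by the Petersen fact the embedding radius is the same at all points of $\delta$, in particular it equals the embedding radius at $x_0$, which is $\ge h_i$. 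Hence $\sys(\Sigma^i_g)\ge 2h_i$; since $\gamma_i$ is an honest closed geodesic of length $2h_i$ by Proposition \ref{prop_length_10}, we also have $\sys(\Sigma^i_g)\le 2h_i$, so equality holds. This is precisely the conclusion of Claim \ref{claim_origin}.

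The only genuinely nontrivial point, and the one I would write out carefully, is the claim that the diameters of $P_i$ cut $\Sigma^i_g$ into contractible pieces — in particular one must be sure that no complementary region is an annulus or otherwise carries nontrivial $\pi_1$, and one must handle the three models ($P_1$, $P_2$ via the $(4g+2)$-gon, and $P_2^*$ via its decomposition into two $(2g+1)$-gons) uniformly. Everything else is a formal combination of transitivity of the symmetry action on diameters and the constancy of injectivity radius along a systolic geodesic; I do not expect any computation here beyond the Euler-characteristic bookkeeping needed to certify the cell decomposition.
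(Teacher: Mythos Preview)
Your proposal is correct and follows essentially the same line as the paper's own justification, which appears in the paragraph immediately preceding Claim \ref{claim_med}: the diameters of $P_i$ cut $\Sigma^i_g$ into contractible pieces (so any shortest closed geodesic meets some diameter), the injectivity radius is constant along a systolic geodesic (citing \cite[p.178]{petersen2006riemannian}), and hence it suffices to check the embedding condition on a single diameter. The paper leaves the transitivity of the cyclic isometry group on the set of diameters implicit, whereas you state it explicitly; your added remark about verifying contractibility via an Euler-characteristic count is a reasonable elaboration of what the paper handles by a figure reference. One small over-reach: at the level of Claim \ref{claim_med} only the models $P_1$ and $P_2$ are in play --- the dual model $P_2^*$ enters only in the further reduction to Claim \ref{claim_final} --- so there is no need to treat ``three models uniformly'' here.
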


Let $H$, $H'$ be the intersections of the diameter $AA'$ and the geodesic segment $DE$ and $E'D'$ respectively, see Figure \ref{fig_4g_plus_2_gon_10}.

In each of the following two pictures Figure \ref{fig_4g_gon_5} and \ref{fig_4g_plus_2_gon_5}, 
the black polygon is  $P_i$, and the green one is the dual polygon  $P_i^*$ around the vertex $A$, where Figure \ref{fig_4g_gon_5} is for $i=1$ and Figure \ref{fig_4g_plus_2_gon_5} is for $i=2$. By symmetry and Claim \ref{claim_med}, we need only consider $x\in OA\subset P_i$. 
Note $OA=OH\cup HA$  and $A$ is the center of $P_1^*$ in Figure \ref{fig_4g_gon_5} and the center of one of the two $2g+1$ polygon of $P_2^*$.
Moreover $P^*_1$ is a polygon congruent to $P_1$.  
We reduce Claim \ref{claim_med} to the following 

\begin{claim}
(1) $B(x,h_1)\to \Sigma_g^1$ is an embedding for each $ x\in OH\subset P_1$ implies that $\sys(\Sigma^1_g)=2h_1$; 

(2) $B(x,h_2)\to \Sigma_g^2$ is an embedding for each $ x\in OH\subset P_2$ and for each $x\in OH\subset P_2^*$
implies that $\sys(\Sigma^i_g)=2h_i$    
		\label{claim_final}	
\end{claim}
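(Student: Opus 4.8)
\textbf{Proof proposal for Claim \ref{claim_final}.}

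The plan is to reduce the embedding question for the metric ball $B(x,h_i)$ to a purely combinatorial-geometric statement about the tessellation $D_i(P_i)$ (and $D_i(P_i^*)$) of $\mathbb{H}^2$. By Claim \ref{claim_med} and the symmetry of $P_i$ it suffices to test centers $x$ on the radius $OA$, and $OA=OH\cup HA$; so the content of Claim \ref{claim_final} is that the $HA$ part can be transferred to the dual picture. The key observation is that around the vertex $A$ the dual polygon $P_i^*$ sits in $\Sigma_g^i$ in exactly the same way that $P_i$ sits around its center $O$: for $i=1$, $P_1^*$ is congruent to $P_1$ with $A$ playing the role of $O$, and for $i=2$ one of the two $(2g+1)$-gons of $P_2^*$ is a regular polygon centered at $A$. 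Thus a point $x\in HA\subset P_i$, measured from the vertex $A$, is the ``same'' as a point $x\in HA\subset P_i^*$ measured from the center of that dual polygon. The formal step is: lift $x\in HA$ to $\mathbb{H}^2$, note that the deck group $D_i$ carries a neighborhood of $A$ onto a neighborhood of the center of $P_i^*$, and conclude that $B(x,h_i)\hookrightarrow\Sigma_g^i$ is an embedding iff the corresponding ball centered at the image point in $P_i^*$ is. Since $H^*$, the midpoint-type point on the dual diameter through $A$, corresponds to $H$, the segment $HA$ in $P_i$ maps precisely onto a segment $OH$ in $P_i^*$ (here $O$ now denotes the center of the relevant dual polygon, as in Figure \ref{fig_4g_plus_2_gon_10} with the parameters \eqref{for_ab} for $P_2^*$). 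Hence testing $x\in HA\subset P_i$ is equivalent to testing $x\in OH\subset P_i^*$.

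Carrying this out, I would proceed in the following order. First, make precise the correspondence between the local picture at a vertex $A$ of $P_i$ and the local picture at the center of the dual polygon: both are quotients of a neighborhood in $\mathbb{H}^2$ by the stabilizer of that point in $D_i$, and these stabilizers are cyclic of the same order ($4g$ in case $i=1$; $2g+1$ in case $i=2$, since vertices of $P_2$ have cone angle $2\pi$ downstairs but the dual $(2g+1)$-gon has angle $\pi/(2g+1)$, matching a $2g+1$-fold rotation). Second, verify that under this identification the geodesic segment $HA$ (from the point $H$ on $\gamma_i$ to the vertex $A$) is sent isometrically onto a segment from the dual center out to the point $H$ on the dual polygon's analogue of $\gamma_i$; this uses Proposition \ref{length} applied with the dual parameters $a,b$ from \eqref{for_ab}. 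Third, conclude that injectivity of $B(x,h_i)$ for all $x\in HA\subset P_i$ follows from injectivity for all $x\in OH\subset P_i^*$, which together with the already-reduced case $x\in OH\subset P_i$ and Claim \ref{claim_med} gives Claim \ref{claim_final}. For $i=1$ the dual is congruent to the original, so a single statement $x\in OH\subset P_1$ covers everything; for $i=2$ the original polygon $P_2$ and the dual building block (the $(2g+1)$-gon) are genuinely different, so both $x\in OH\subset P_2$ and $x\in OH\subset P_2^*$ are needed.

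The main obstacle I anticipate is not any single hard estimate but rather making the ``same local picture'' identification airtight: one must check that the dual polygon $P_i^*$, as defined via centers of adjacent tiles in $D_i(P_i)$, really does have a copy sitting around \emph{every} vertex $A$ of $P_i$ with the vertex at its center, and that the deck transformation realizing $O\leftrightarrow A$ does carry the relevant portion of one fundamental domain onto the other respecting the segments $OH$, $HA$. This is exactly the content of the unnumbered Proposition preceding this subsection and of the Remark about $P_2^*$ being a $4g$-gon with all vertices identified to one point; so the work is to quote those facts correctly and track the segment endpoints. Once the correspondence is set up, the reduction itself is immediate. After this, the remaining genuine analytic work — showing $B(x,h_i)$ embeds for $x$ ranging over the single segment $OH$ in each polygon — is deferred to the estimates that follow (Sections 4--5), and Claim \ref{claim_final} is precisely the statement that this is all one needs to check.
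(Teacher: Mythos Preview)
Your overall strategy matches the paper's: both arguments pass from Claim~\ref{claim_med} to Claim~\ref{claim_final} by noting that $OA=OH\cup HA$, that the vertex $A$ of $P_i$ is the center of (the relevant piece of) the dual fundamental domain $P_i^*$, and that therefore the segment $HA$, viewed from $A$, plays exactly the role of the segment ``$OH$'' in $P_i^*$. For $i=1$ the dual is congruent to $P_1$, so one hypothesis suffices; for $i=2$ the $(2g+1)$-gon of $P_2^*$ is genuinely different from $P_2$, so both hypotheses are needed. That is precisely the paper's reduction, and the paper spends no more words on it than the paragraph preceding the Claim.

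Where your formalization goes astray is in the mechanism you invoke. You write that ``the deck group $D_i$ carries a neighborhood of $A$ onto a neighborhood of the center of $P_i^*$'' and that the stabilizers are ``cyclic of the same order ($4g$ in case $i=1$; $2g+1$ in case $i=2$).'' Both statements are incorrect: $D_i$ acts \emph{freely} on $\mathbb{H}^2$ (the quotient is the smooth surface $\Sigma_g^i$, not an orbifold), so every point stabilizer is trivial, and no deck transformation is needed at all --- by construction of the dual tessellation, the vertex $A$ of $P_i$ \emph{is} the center of a tile of $D_i(P_i^*)$ (Proposition preceding the Remark). You are conflating the deck group $D_i$ with the cyclic symmetry group of $\Sigma_g^i$; the orders $4g$ and $2g+1$ you quote are angle counts at $A$ in the tessellation, not orders of stabilizers.

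This is not a fatal gap --- the correct version is strictly simpler than what you wrote --- but you should excise the deck-transformation/stabilizer language entirely. The only content is: $P_i$ and $P_i^*$ are two fundamental domains for the \emph{same} free action, the point $A$ is simultaneously a vertex of $P_i$ and the center of (a $(2g+1)$-gon of) $P_i^*$, the diameter segment $OA$ is common to both pictures, and the crossing point $H$ of $\gamma_i$ with $OA$ is an intrinsic point of $\Sigma_g^i$ independent of the choice of fundamental domain; hence $HA$ read in $P_i$ is literally the same segment as ``$OH$'' read in $P_i^*$.
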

We remind the reader that in $P_2^*$, $O$ is the center of one of two $2g+1$ polygons as is defined in Section \ref{subsect_cal}. 

\begin{figure}[htbp]
		\centering
		\includegraphics{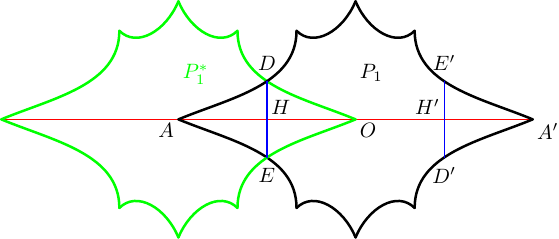}
		\caption{}
		\label{fig_4g_gon_5}
\end{figure}

\begin{figure}[htbp]
		\centering
		\includegraphics{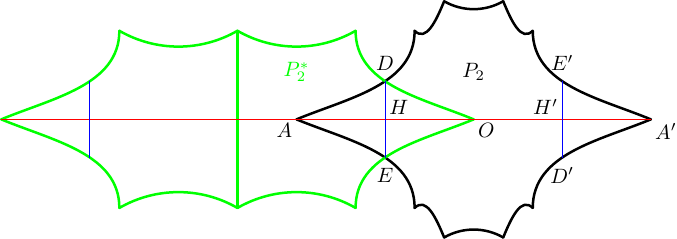}
		\caption{}
		\label{fig_4g_plus_2_gon_5}
\end{figure}

\subsection{Lift $B(x, h_i)$ to the universal cover}

Below for  $B(x, h_i)$ we assume that $x\in OH\subset P$.

\begin{proposition} For the polygon $P$,

		(1) The distance between any vertex in the polygons 
		and the segment $OH$ is bigger than $h_i$ for $P=P_1, P_2, P_2^*$. 

		(2) The distance between an edge and the diameter $AA'$  is larger than $h_i$ except the nearest and second nearest edges  for  either $P=P_1$ and  $g\ge3$,  or   $P=P_2$ and $g\ge2$,  or $P=P_1^*$ and $g\ge4$. 

		\label{prop_intersect_one_10}
\end{proposition}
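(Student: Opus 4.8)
The plan is to estimate distances using the explicit trigonometric data already collected in Proposition \ref{length} (especially $\cosh|OA|=\cot a\cot b$, $\cosh|AD|=\cos a/\sin b$, $\cosh|AH|$, $\sinh|AH|$) together with the right-angle formulas (\ref{for_tri_1}), (\ref{for_tri_3}), (\ref{for_tri_5}), and then compare against $h_i=\arccosh(1+\cos 2a+\cos 2b)$ in each of the three models via the substitutions in (\ref{for_ab}). In all three cases the angles $a,b$ are of size $\Theta(1/g)$, so the right-hand quantities are close to their Euclidean analogues and everything becomes a monotonicity/asymptotics check in $g$; the numerical thresholds ($g\ge 3$, $g\ge 2$, $g\ge 4$) signal exactly where the inequalities first hold, so we cannot avoid an explicit estimate, but we should not need to ``grind'' more than one clean inequality per part.

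For part (1): the segment $OH$ lies on the diameter $AA'$, so it suffices to bound below the distance from each vertex $V$ of the relevant polygon to the line through $AA'$. The closest vertices are $A$ and $A'$ themselves; but $A\in OA$ is an endpoint of $OA\supset OH$, not of $OH$, and in fact $OH$ is the portion of $OA$ from $O$ up to $H$, where $H=DE\cap AA'$ and $|AH|$ is given in Proposition \ref{length}(6)–(7). So the nearest point of the closure of $OH$ to the vertex $A$ is $H$ itself, at distance $|AH|$, and one checks $|AH|>h_i$ directly: using (\ref{for_tri_1}) in $\triangle ADH$ we have $\cosh|AH|=\cosh|AD|/\cosh|DH|$, and comparing $\cosh^2|AH|$ with $\cosh^2 h_i=(1+\cos2a+\cos2b)^2$ reduces, after the substitutions (\ref{for_ab}), to a single-variable inequality in $g$ that holds for all $g\ge 2$ (hence a fortiori in the stated ranges). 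Every other vertex is farther from the line $AA'$ than $A$ is — this follows from convexity of the polygon and the fact that $A$ and $A'$ are the two vertices on the diameter — so the bound for $A$ gives the bound for all of them; one drops a perpendicular from $V$ to $AA'$ and uses (\ref{for_tri_3}) to see the foot is interior and the distance exceeds $|AH|$. The three models differ only in which polygon's vertices we range over and in the values $(a,b)$, but the computation is uniform.

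For part (2): label the edges of $P$ by their distance from the diameter $AA'$; the nearest edge contains the midpoint $D$ (or $E$), at distance $|DH|<h_i$ (this edge carries the systolic geodesic, so it must be closer than $h_i$), and the second-nearest edge is the adjacent one sharing the vertex $A$. For every edge $e$ at combinatorial distance $\ge 2$ from $AA'$ I would bound $\operatorname{dist}(e,AA')$ below by dropping the common perpendicular from $e$ to the line $AA'$; its foot, by the symmetry of the regular polygon, lands at or beyond a controlled point, and the length of that perpendicular is computed by applying (\ref{for_tri_1}) and (\ref{for_tri_3}) in the right-angled triangle cut off by the perpendicular, a diameter-adjacent edge, and a vertex — concretely one gets a formula of the shape $\cosh(\operatorname{dist})=\cosh|OA|\cdot(\text{something}\ge 1)$ or an expression monotone in the edge index, so the minimum over such edges is attained at the third-nearest edge. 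It then remains to verify $\operatorname{dist}(\text{3rd edge},AA')>h_i$, again a one-variable inequality in $g$ after substituting (\ref{for_ab}); this is where the thresholds $g\ge 3$ (for $P_1$), $g\ge 2$ (for $P_2$), $g\ge 4$ (for $P_1^*$) come from, and these will be checked by showing the relevant difference of $\cosh$'s is increasing in $g$ and nonnegative at the threshold value. (Note the proposition as stated omits $P_2^*$ in part (2); presumably $P_2^*$ is handled separately, or is subsumed since $P_2^*$ is built from two copies of a $(2g+1)$-gon with $a=2b$.)

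The main obstacle is part (2): unlike part (1), where ``nearest vertex'' is pinned down immediately by convexity, here one must argue that among all edges at combinatorial distance $\ge 2$ the geometric minimum of the distance to $AA'$ is realized by the third-nearest edge — i.e. that this distance is monotone in the edge index — and then push the resulting transcendental inequality in $g$ across exactly the stated threshold. Both the monotonicity-in-edge-index step and the sharp threshold check are where care is needed; the trigonometric bookkeeping in the remaining cases is routine given Section 3 and Proposition \ref{length}.
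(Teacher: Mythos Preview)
Your approach to part (2) is essentially the paper's: drop the common perpendicular from the edge to the diameter, get a formula depending on the edge index $k$, observe it is monotone in $k$, and then check the third-nearest edge against $h_i$. The paper writes the formula explicitly as $\cosh|R_1R_2|=\cos b\,\sin(2k+1)a/\sin a$ (via the trirectangle identity (\ref{for_trirect_3}) applied to the trirectangle with vertices $O$, the midpoint $E$, and the two feet), and then compares with $h_i$ by computer rather than by an analytic monotonicity-in-$g$ argument; but the geometry is the same.

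Part (1), however, has a genuine gap. Your treatment of the vertex $A$ is fine (indeed $d(A,OH)=|AH|$ and one must check $|AH|>h_i$), but your passage to the remaining vertices does not work as written. The sentence ``every other vertex is farther from the line $AA'$ than $A$ is'' is vacuous, since $A$ lies on that line and has distance $0$ to it; convexity of the polygon gives you nothing here. What you actually need for a vertex $V\ne A,A'$ is that the perpendicular distance from $V$ to the line $AA'$ already exceeds $h_i$ (which then bounds $d(V,OH)$ from below), and this is \emph{not} a consequence of the bound for $A$. The paper handles this by computing that perpendicular distance directly: writing $\angle VOA=2ka$ and using (\ref{for_tri_3}) in the right triangle $OVR$ gives
\[
\sinh|VR|=\sinh|OV|\,\sin(2ka)=\sqrt{\cot^2 a\,\cot^2 b-1}\,\sin(2ka)\ \ge\ \sqrt{\cot^2 a\,\cot^2 b-1}\,\sin 2a,
\]
and then compares this lower bound with $h_i$ (in the paper, by computer). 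Your proposal to instead compare $|VR|$ with $|AH|$ would require a separate calculation of exactly the same type; it is not for free from convexity. So the missing idea in your part (1) is precisely this direct computation of $\sinh|VR|$ for the nearest off-diameter vertex, followed by a comparison with $h_i$.
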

\begin{proof}
(1) In Figure \ref{fig_vert_to_diameter_1_10}, $AA'$ is a diameter of the polygon
, $O$ is the center of either 4g-gon $P_1$, or (4g+2)-gon $P_2$,  or the center of the $2g+1$-gon of $P_2^*$ containing $A$. 
		$V$ is a vertex of the polygon 
		$P_1, P_2$ or the $2g+1$ polygon of $P_2^*$ containing $A$. 

		. Thus $|OV|=|OA| $. By Proposition \ref{prop_length_10} (3), $\cosh |OV|=\cot a\cot b$. 
 $\alpha=2ka$, $k\in \mathbb{N}$.  
 Distance between $AA'$ and $V$ is realized by $|VR|$. By  (\ref{for_tri_3}), \begin{eqnarray*}
		 \sinh |VR| &=& \sinh |OV| \sin \alpha \\ 
		 &=& \sqrt{\cot^2 a\cot^2 b- 1}\sin 2ka \\ &\ge&  \sqrt{\cot^2a\cot^2b - 1}\sin{ 2a}.
 \end{eqnarray*} 
 We use computer to compare $|VR|$ and $h_i$, finding that $|VR|\ge h_i$ if $ P = P_1, P_2$ 
 or the $2g+1$ polygon of $P_2^*$ containing $A$. Vertices of the other $2g+1$ polygon of $P_2^*$ do not meet $B(x,h_2)$ is a corollary of Proposition \ref{prop_intersect_one_10} (2). 
		\begin{figure}[htbp]
				\centering
				\includegraphics{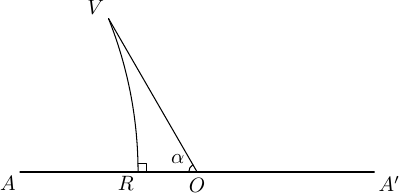}
				\caption{}
				\label{fig_vert_to_diameter_1_10}
		\end{figure}

(2) In Figure \ref{fig_vert_to_diameter_2_10}, $AA'$ is a diameter and $O$ is the center of $P_1, P_2$ or the center of the $2g+1$ polygon of $P_2^*$ containing $A$. $E$ is the mid-point of an edge of the polygon $P_1, P_2$ or the $2g+1$ polygon of $P_2^*$ containing $A$. $|OE|$ is the distance from the edge to $O$. By Proposition \ref{prop_length_10} (1), $\cosh |OE|= \cos b/\sin a$. $\beta=(2k+1)a$, $k\in \mathbb{N}$.  Distance between $AA'$ and the edge is realized by $|R_1R_2|$. By  (\ref{for_trirect_3}), 
		\begin{eqnarray*}
				\cosh |R_1R_2| &=& \cosh |OE| \sin \beta \\
				&=& \frac{\cos b \sin (2k+1)a}{\sin a}.
		\end{eqnarray*} Then we compare $|R_1R_2|$ and $h_i$ by computer. We get: $|R_1R_2| < h_i$ when $k\le 2$; $|R_1R_2| \ge h_i$ when $k \ge 3$, if $P=P_1$ and $g\ge 3$ or $P=P_2$ and $g\ge 2$ or $P=P_2^*$ and $g\ge 4$. 
		
		\begin{figure}[htbp]
				\centering
				\includegraphics{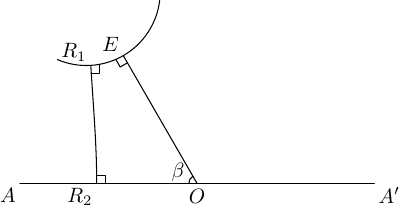}
				\caption{}
				\label{fig_vert_to_diameter_2_10}
		\end{figure}
\end{proof}

\begin{proposition}
		The upper half of a lift of $B(x,h_i)$ in $\mathbb{H}^2$ doesn't intersect any edges of the tessellation induced by the polygon 
		other than edges $AB_1, AB_2, B_1C_1$ in Figure \ref{fig_4g_plus_2_gon_univ_2}, if $P=P_1$ and $g\ge4$; if $P=P_2$ and $g\ge7$ or if $P=P_2^*$ and $g\ge3$. 

		\label{prop_intersect_10}
\end{proposition}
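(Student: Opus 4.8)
The plan is to work in the universal cover $\mathbb{H}^2$ and eliminate every edge of the tessellation except the three named ones. Fix a lift of $x\in OH$, still called $x$, lying on the lift $\ell$ of the diameter $AA'$, and fix the corresponding lift of the vertex $A$; by Proposition \ref{prop_length_10} the distance $\rho=d(x,A)$ ranges over $[\,|AH|,|OA|\,]$ as $x$ runs over $OH$. Two preliminary remarks. Since $\ell$ bisects the angle of $P$ at $A$ it is an axis of symmetry of $P$, so reflection in $\ell$ permutes the tiles and fixes $x$; hence $B(x,h_i)$ and its pattern of intersections with the tessellation are symmetric in $\ell$, and it suffices to control the half-disc $B^{+}$ on one side of $\ell$ (the ``upper half'' of Figure \ref{fig_4g_plus_2_gon_univ_2}), the other half being its mirror image. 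And by Proposition \ref{prop_intersect_one_10}(1) no vertex of the tessellation lies within $h_i$ of $OH$, so $B(x,h_i)$ contains no vertex and meets any edge it meets in that edge's interior; it is thus enough to list the edges met.

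First I would discard the edges far from $\ell$. Every point of $B(x,h_i)$ is within $h_i$ of $x\in OH\subset\ell$, so any edge it meets lies within $h_i$ of $\ell$; Proposition \ref{prop_intersect_one_10}(2) then leaves only a bounded, explicit family of edges, clustered near the two ends $A$ and $A'$ of the diameter. The cluster near $A'$ is discarded as well: along $\ell$ one has $d(x,A')=|xO|+|OA'|\ge|OA|>h_i$ (for the admissible genera), and an elementary law-of-cosines estimate along $\ell$ — with the cosine term carrying the favourable sign — then places every edge lying on that side of the polygon at distance $>h_i$ from all of $OH$. What remains is a short list near $A$: the edges through $A$, and the first one or two edges of $P$ adjacent to them, together with their mirror images in $\ell$.

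Then I would dispatch this short list using the formulae of Section 3. For an edge $e$ through $A$ making angle $\varphi\le\pi/2$ with $\ell$, the right triangle on $x$ and $A$ together with (\ref{for_tri_3}) gives $\sinh d(x,e)=\sinh\rho\,\sin\varphi$, the foot landing on $e$ since $\rho\le|OA|\le|AB_1|$ by Proposition \ref{prop_length_10}(2)--(3); hence $d(x,e)\ge\operatorname{arcsinh}(\sinh|AH|\,\sin\varphi)$, with $\sinh|AH|$ given by Proposition \ref{prop_length_10}(7). For an edge through $A$ with $\varphi>\pi/2$ the point of $e$ nearest $x$ is $A$ itself, so $d(x,e)=\rho\ge|AH|$. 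For an edge $e$ of $P$ not through $A$, formula (\ref{for_trirect_3}) gives $d(\ell,e)$ exactly as in the proof of Proposition \ref{prop_intersect_one_10}(2), and $d(x,e)\ge d(\ell,e)$. A comparison of each of these lower bounds with $h_i=\arccosh(1+\cos2a+\cos2b)$, performed on a computer, shows that it exceeds $h_i$ for every edge of the short list except $AB_1$, $AB_2$ and $B_1C_1$, and that the genus thresholds $g\ge4$ ($P_1$), $g\ge7$ ($P_2$), $g\ge3$ ($P_2^{*}$) are exactly those for which all these comparisons hold. Together with the symmetry reduction this proves the proposition.

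The hard part is that last comparison: the critical inequalities (for $P_1$, $\operatorname{arcsinh}(\sinh|AH|\sin5b)\ge h_i$, and the analogues for $P_2$ and $P_2^{*}$) are close to equalities at the smallest admissible genus, which is precisely why the genus hypotheses take the stated form and why the check must be numerical. One also has to make sure the enumeration of the short list is complete, so that no fourth edge is missed; this is the bookkeeping that Figure \ref{fig_4g_plus_2_gon_univ_2} organizes.
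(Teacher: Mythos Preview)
Your enumeration of the ``short list'' is incomplete, and this is where the proof breaks. You classify edges into (i) edges of the home tile $P$ and (ii) edges of the tessellation through $A$, and you give correct mechanisms to bound the distance from $OH$ to each of these. But the ball $B(x,h_i)$, once it crosses $AB_1$ into the adjacent tile, can potentially meet edges of \emph{that} tile which are neither edges of $P$ nor edges through $A$: in the notation of Figure~\ref{fig_4g_plus_2_gon_univ_2} these are $B_1C_1$, $B_1C_3$, $B_1C_4$, $C_1C_2$, and (through $AB_2$) $B_2C_6$. Your short list ``the edges through $A$, and the first one or two edges of $P$ adjacent to them'' does not include these, and none of your three distance mechanisms (formula~(\ref{for_tri_3}) for edges through $A$; Proposition~\ref{prop_intersect_one_10}(2) for edges of $P$; the law-of-cosines estimate near $A'$) applies to them. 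Indeed $B_1C_1$ \emph{is} met --- it is one of the three edges named in the statement --- so any ``discard everything on the short list'' argument must be fine enough to retain $B_1C_1$ while excluding $B_1C_3$, $B_1C_4$, $C_1C_2$, $B_2C_6$.

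The paper fills this gap with a structural lemma you are missing: Lemma~\ref{lem_dist_edges_10} shows that any two non-adjacent edges of a single tile are at distance $>h_i$ apart. This is what prevents the ball from ``tunnelling'' through a second tile into a third, and cuts the a priori infinite list of tessellation edges down to the five above plus $AB_1,AB_2$. Even then the remaining edges require separate arguments: $B_1C_4$ and $B_2C_6$ are handled via the diameters $B_1B_1'$, $B_2B_2'$ (Lemma~\ref{lem_x_b1c4}), and $B_1C_3$, $C_1C_2$ by direct computations with (\ref{for_birect_2}). Your sketch would need all of this added before the final numerical comparison is even well-posed.
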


\begin{figure}[htbp]
		\centering
		\includegraphics{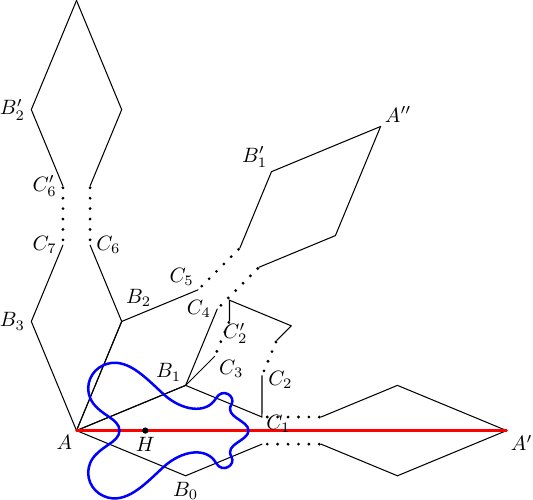}
		\caption{}
		\label{fig_4g_plus_2_gon_univ_2}
\end{figure}

\begin{figure}[htbp]
		\centering
		\includegraphics{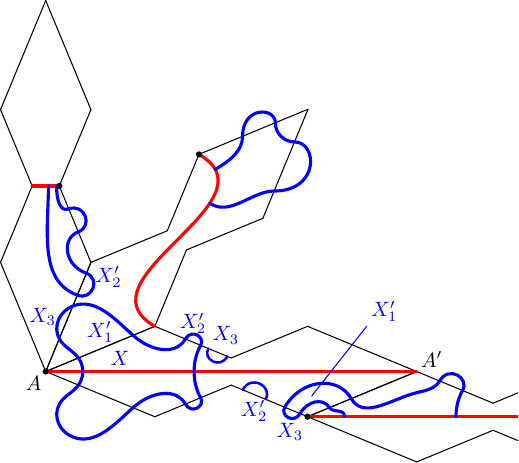}
		\caption{}
		\label{fig_4g_gon_univ_3}
\end{figure}

This proposition characterizes the shape of $B(x,h_i)$.

\begin{proof}
		By Proposition \ref{prop_intersect_one_10}, $B(x,h_i)$ may intersect $AB_1$ and $B_1C_1$ in their interior in all of the three models. 

				First, we calculate the distance between the segment $OH$ and $AB_j$ ($j=1,2,3$).
		\begin{lemma}
		The distance between $OH$ and $AB_j$ ($j=1,2$) is smaller than $h_i$, while the distance between $OH$ and $AB_3$ is bigger than $h_i$, if $P=P_1$ and $g\ge4$, if $P = P_2$ and $g\ge7$, if $P = P_2^*$ and $g\ge 3$. 
				\label{lem_abi}
		\end{lemma}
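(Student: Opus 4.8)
The plan is to set up explicit coordinates in $\mathbb{H}^2$ for the universal cover of $P$, locate the vertices $A$, $B_j$ (the vertices of the polygons adjacent to $A$ along the tessellation, indexed by how far they sit from the diameter $AA'$), and then compute the distance from the geodesic segment $OH$ to the geodesic line containing $AB_j$. Since $O$, $H$ and $A$ all lie on the diameter $AA'$, and $B_j$ is obtained from a neighboring vertex by the symmetry of the polygon, the needed data are: $|OA| = \arccosh(\cot a \cot b)$ from Proposition \ref{prop_length_10}(3), the angle $\angle OAB_j$ at the vertex $A$, which is a multiple of $2b$ (namely $\angle B_jAA'$ grows by $2b$ as $j$ increases, using that the full vertex angle is $2b$ and the polygon is regular), and then a right-triangle or trirectangle computation for the foot of the perpendicular from $OH$ to the line $AB_j$. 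The key point is that the nearest point of $OH$ to the line $AB_j$ is realized either at an interior point of $OH$ (drop a perpendicular, use \eqref{for_tri_5} or \eqref{for_trirect_3}) or at the endpoint $A$ itself; in the latter case the distance is $0$ and the bound is trivial, so the interesting range is where the perpendicular foot is interior.

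The concrete steps I would carry out, in order: (i) record the position of $A$ on the diameter and the directions of the edges $AB_1, AB_2, AB_3$ emanating from $A$, using \eqref{for_ab} to get the vertex half-angle $b$ and hence the angles $\angle B_jAA' = (2j-1)b$ for $P_1$ (and the analogous expressions $(2j-1)b$ or the appropriate arithmetic progression for $P_2$ and $P_2^*$, read off from Figure \ref{fig_4g_plus_2_gon_10} and Figure \ref{fig_4g_plus_2_gon_univ_2}); (ii) for each $j$, compute $\sinh(\mathrm{dist}(OH, AB_j))$ by dropping a perpendicular from the line $AB_j$: since $A\in OH$ extended and $\angle$ between $OH$ and $AB_j$ at $A$ equals $(2j-1)b$, if this angle is $\ge \pi/2$ the distance from $OH$ to $AB_j$ is just measured along $AA'$ past $A$ and one uses \eqref{for_tri_3} on the right triangle with hypotenuse a subsegment of $AA'$; if it is $<\pi/2$ the foot may be interior to $OH$ and one gets $\sinh \mathrm{dist} = \sinh|A\!\cdot\!| \sin((2j-1)b)$-type formulas, or the distance is $0$ when the foot is at $A$; (iii) plug in $a,b$ from \eqref{for_ab} as functions of $g$ and compare with $h_i = \arccosh(1+\cos 2a + \cos 2b)$ from the Corollary after Proposition \ref{prop_length_10}; (iv) verify the inequalities $\mathrm{dist}(OH,AB_1), \mathrm{dist}(OH,AB_2) < h_i$ and $\mathrm{dist}(OH,AB_3) > h_i$ hold in the stated genus ranges, reducing each to a one-variable trigonometric inequality in $1/g$ that is checked by monotonicity for large $g$ and by the computer for the finitely many small $g$ (exactly as was done in Proposition \ref{prop_intersect_one_10}).

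The main obstacle I anticipate is step (ii)–(iii): getting the distance formula right in each of the three models simultaneously, because $a$ and $b$ are related differently in $P_1$, $P_2$, $P_2^*$ (see \eqref{for_ab}), so the angle $\angle B_jAA'$ and the side lengths entering the perpendicular-foot computation differ, and one must be careful about whether the nearest point on $OH$ is interior or is the endpoint $O$ or $A$ — a wrong case distinction would give a distance that is too small. Once the correct monotone expression $\sinh^2 \mathrm{dist}(OH, AB_j)$ as a function of $g$ is in hand, the comparison with $h_i^{}$ is a routine estimate: the differences are monotone in $g$ in the relevant ranges, so it suffices to check the threshold genus ($g=4$ for $P_1$, $g=7$ for $P_2$, $g=3$ for $P_2^*$) numerically and then argue the inequality only improves as $g$ grows, which is where the genus hypotheses in the statement come from.
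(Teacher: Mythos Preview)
Your plan is essentially the same as the paper's proof: the paper also takes the angle $\angle B_jAA' = (2j-1)b$ at the vertex $A$, applies formula \eqref{for_tri_3} to the right triangle with hypotenuse $Ax$ to get $\sinh|Rx| = \sin((2j-1)b)\,\sinh|Ax|$, observes that the infimum over $x\in OH$ is attained at $x=H$ (since $|AH|\le |Ax|$ for all $x\in OH$), plugs in $\sinh|AH|$ from Proposition~\ref{prop_length_10}(7), and then compares with $h_i$ by computer. Two minor remarks: the angle is $(2j-1)b$ uniformly in all three models $P_1,P_2,P_2^*$, so no separate arithmetic progression is needed; and your case distinction in step~(ii) about whether the angle exceeds $\pi/2$ or whether the foot falls at $A$ is unnecessary, since $A\notin OH$ and the perpendicular from any $x\in OH$ to the line $AB_j$ always lands on the line, giving the clean formula above directly.
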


		\begin{proof}
				We calculate this result by Figure \ref{fig_abi}. In this figure, $AA'$ is a diameter and $AB_j$ is an edge. $x$ is a point on $AA'$ between $H$ and $O$. Thus $\inf d(x,A) = |AH|$ for $x$ between $H$ and $O$. $\angle A=(2j-1)b$. Then the infimum distance $Rx$ between $x$ and $AB_i$  is 
				\begin{eqnarray*}
						\inf_{x\in OH}\sinh |Rx| &=& \sin A \inf_{x\in OH}\sinh Ax \,\,\,\,(\text{by (\ref{for_tri_3})})\\ 
						&=&  (\sin (2j-1)b)\sinh |AH| \\
						&=&
				(\sin (2j-1)b )\cot b \sqrt{\frac{\cos^2 a - \sin^2 b}{\cos^2 a + \cos^2 b}} \,\,\,\,(\text{by Prop. 3(2)})
				\end{eqnarray*}
				by  (\ref{for_tri_3}).  
				Then we compare $\inf_{x\in OH}|Rx|$ and $h_i$ by computer programming. We get $\inf_{x\in OH}|Rx| \le h_i$ for $j = 1, 2 $ and $\inf_{x\in OH}|Rx| > h_i$ for $j=3$, if $P=P_1$ and $g\ge4$; if $P=P_2$ and $g\ge7$ or if $P=P_2^*$ and $g\ge3$. Therefore, the lemma is proved. 
\end{proof}

Lemma \ref{lem_abi} is equivalent to say that  $B(x,h_i)$ may intersect $AB_1$ or $AB_2$, but won't meet $AB_3$ or any other further edges whose one vertex is $A$.

		\begin{figure}[htbp]
				\centering
				\begin{minipage}[t]{0.45\textwidth}
\centering
				\includegraphics{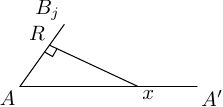}
				\caption{}
				\label{fig_abi}
				\end{minipage}
				\begin{minipage}[t]{0.45\textwidth}
				\centering
				\includegraphics{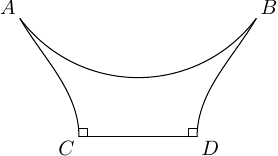}
				\caption{}
				\label{fig_trapezoid_2}
		\end{minipage}
		\end{figure}
		\begin{lemma}
				$h_i$ is smaller than the distance between any two edges of the polygon that does not intersect.

				\label{lem_dist_edges_10}
		\end{lemma}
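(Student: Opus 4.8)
\textbf{Proof proposal for Lemma \ref{lem_dist_edges_10}.}

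The plan is to reduce the statement to a finite, explicit list of cases indexed by the "combinatorial type" of a pair of disjoint edges, and then to bound the distance in each case from below by a quantity that the computer can compare against $h_i$. Since $h_i$ is known in closed form (the Corollary after Proposition \ref{prop_length_10}), the whole lemma is a verification once the geometry is set up correctly. First I would fix the polygon $P$ (one of $P_1$, $P_2$, $P_2^*$) together with its tessellation-type neighbours: a pair of disjoint edges of $P$ either (a) lies in a single polygon of the tessellation, or (b) lies in two adjacent copies, etc.; but for this lemma we only need pairs of edges \emph{of $P$ itself}, so the combinatorial parameter is just how many vertices apart the two edges sit along $\partial P$. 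Two edges of $P$ fail to intersect precisely when they are not adjacent, i.e.\ they are separated by at least one vertex on each side; so I would parametrise by $k\ge 1$, the minimal number of intervening vertices.

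Next I would compute the distance between two such edges using the trirectangle / two-right-angled-quadrilateral formulae of Section 3. The common perpendicular between two disjoint geodesic segments of $\partial P$, together with the two edges and a diagonal through $O$ (or through the appropriate vertex), forms a quadrilateral with two right angles, so formula (\ref{for_birect_1}) or the trirectangle formulae (\ref{for_trirect_3})–(\ref{for_trirect_6}) apply directly once I know $|OE|$ (Proposition \ref{prop_length_10}(1)) and the relevant central angle, which is an integer multiple of $a$ depending on $k$. As in the proof of Proposition \ref{prop_intersect_one_10}(2), this yields a monotone expression in $k$: the distance between the edge-pair is increasing in the angular separation, so it suffices to check the \emph{closest} disjoint pair, i.e.\ $k=1$ (edges separated by exactly one vertex), and observe that everything further is automatically larger. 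I would then feed the resulting closed-form lower bound into the same computer comparison already used for Lemma \ref{lem_abi} and Proposition \ref{prop_intersect_one_10}, verifying the inequality in the stated genus ranges.

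The main obstacle I anticipate is not the computation but getting the geometric reduction airtight: I must make sure that "distance between two edges of $P$" is genuinely realised by a common perpendicular that stays inside the convex polygon $P$ (so that the quadrilateral picture is valid), and that the monotonicity-in-$k$ claim is justified rather than merely plausible — in particular the two edges are chords of a circle of fixed radius subtending angles that grow with $k$, and one must check the perpendicular distance between two such chords is increasing, which is clear for a convex regular polygon but deserves a sentence. A secondary subtlety is the case $P=P_2^*$, which is a union of two $(2g+1)$-gons glued along an edge: a "disjoint pair of edges of $P_2^*$" may consist of one edge from each half, and for those the relevant quadrilateral is built across the glueing edge rather than around a single centre $O$; I would handle this by noting that any such cross pair is at least as far apart as the closest pair lying in one half together with the glueing edge, reducing again to a single worst case. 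Once these reductions are in place the remaining inequalities are routine and dispatched numerically.
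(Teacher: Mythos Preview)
Your overall strategy---reduce to the closest pair of disjoint edges, compute that distance in closed form with the quadrilateral formulae of Section~3, and compare numerically against $h_i$---is exactly what the paper does. The paper, like you, simply asserts that the nearest disjoint pair realises the minimum and checks only that case.

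Where you differ is in the choice of quadrilateral. You propose to route the computation through the centre $O$ via $|OE|$ and a central angle; this does work (by symmetry one obtains a Lambert trirectangle with acute angle at $O$), but it is more involved than necessary and your description of the resulting figure as ``a quadrilateral with two right angles'' formed with ``a diagonal through $O$'' is not quite accurate---what one actually gets through $O$ is a pentagon that must be bisected. The paper instead takes the single intervening edge $AB$ itself as the base of a two-right-angled quadrilateral: the two disjoint edges emanate from $A$ and $B$ with the full vertex angle $2b$ on each side, and formula~(\ref{for_birect_2}) gives immediately
\[
\cosh|CD| \;=\; \Bigl(\tfrac{2\cos^2 a}{\sin^2 b}-1\Bigr)\sin^2 2b \;-\; \cos^2 2b,
\]
which is then compared to $\cosh h_i$. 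This vertex-based quadrilateral is purely local and sidesteps both the pentagon-splitting and your $P_2^*$ worry about edges from opposite halves: the intervening edge and the two vertex angles are all that enter, and for the $(2g+1)$-gon half of $P_2^*$ these are exactly the same data. Your instinct to justify the monotonicity in $k$ is sound, but the paper does not supply such an argument either.
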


		\begin{proof}
				It is sufficient to calculate the distance between two nearest non-intersecting edges in a polygon. 
				In Figure \ref{fig_trapezoid_2}, $A, B$ are vertices of the polygon, $\angle A =\angle B = 2b$. $AC$ and $BD$ are parts of edges of the polygon. $AB$ is an edge and thus $|AB|=2\arccosh \cos a/\sin b $ by Proposition \ref{prop_length_10}. We calculate $|CD|$ by  (\ref{for_birect_2}). Here \begin{eqnarray*}
						\cosh |CD| &=& \cosh |AB| \cdot \sin \angle A \sin \angle B  - \cos \angle A \cos \angle B \\
						&=& \left( \frac{2\cos^2 a}{\sin^2 b}-1 \right)\sin^2 2b - \cos^2 2b.
				\end{eqnarray*} We compare $|CD|$ with $h_i$, finding that $|CD| > h_i$ in all of the three models and obtain this conclusion. 

		\end{proof}

		By Lemma \ref{lem_dist_edges_10}, the $B(x,h_i)$ doesn't meet any edges in Figure \ref{fig_4g_plus_2_gon_univ_2} except $B_1C_3$, $C_1C_2$, $B_1C_4$, $B_2C_6$ and $AB_j$ ($j=1,2$). 
		
		\begin{lemma}
		$B(x,h_i)$ doesn't meet $B_1C_4$ and $B_2C_6$.
		\label{lem_x_b1c4}
		\end{lemma}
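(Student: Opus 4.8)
The plan is to reduce the claim to a distance computation analogous to the ones already carried out in Proposition \ref{prop_intersect_one_10}(2) and Lemma \ref{lem_abi}, namely to bound from below the infimum of $d(x, B_1C_4)$ and $d(x, B_2C_6)$ as $x$ ranges over $OH$, and to check that this infimum exceeds $h_i$ in each of the three polygon models (with the appropriate genus restrictions $g\ge 4$, $g\ge 7$, $g\ge 3$). The key geometric input is that $B_1C_4$ and $B_2C_6$ are edges of the tessellation lying ``one polygon further out'' than the edges $B_1C_1$, $B_1C_3$, $C_1C_2$ that $B(x,h_i)$ is already known (by Lemma \ref{lem_dist_edges_10}) to possibly meet; so morally their distance to $OH$ should be controlled by the distance from a suitable vertex or edge of $P$ to $OH$, which Proposition \ref{prop_intersect_one_10} has shown is $> h_i$ except for the two nearest edges.

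First I would set up the picture: $B_1$ and $B_2$ are the two vertices of $P$ adjacent to $A$ along the edges $AB_1$, $AB_2$, and $C_4$ (resp. $C_6$) is the far endpoint of an edge emanating from $B_1$ (resp. $B_2$) in the copy of $P$ across $AB_1$ (resp. $AB_2$). Using the triangle inequality I would write, for $x\in OH$,
\[
d(x, B_1C_4)\ \ge\ d(x, B_1)\ -\ \big(\text{something}\big)\quad\text{is too lossy},
\]
so instead I would work directly with the foot of the perpendicular from $x$ to the geodesic line containing $B_1C_4$. The cleanest route is: the segment $B_1C_4$ is separated from the region $OHA$ by the edge $AB_1$ (and by the vertex $B_1$), so any geodesic from $x\in OH$ to a point of $B_1C_4$ either crosses $AB_1$ beyond $B_1$ or passes through $B_1$; hence $d(x,B_1C_4)\ge d(x,B_1)$, and $d(x,B_1)\ge d(H,B_1)$ since $x$ lies between $H$ and $O$ and $B_1$ is ``behind'' $A$ from $H$. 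Then $|HB_1|\ge |HA| = |AH|$ minus nothing — more precisely $|HB_1|\ge |B_1A| - |AH|$ is again lossy, so I would instead compute $|HB_1|$ exactly by the hyperbolic cosine law in $\triangle AHB_1$, where $|AH|$ is given by Proposition \ref{prop_length_10}(6)--(7), $|AB_1| = 2|AD| = 2\arccosh(\cos a/\sin b)$, and the angle $\angle HAB_1 = b$.

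Having an exact formula $\cosh|HB_1| = \cosh|AH|\cosh|AB_1| - \sinh|AH|\sinh|AB_1|\cos b$ (this is formula (\ref{for_tri_cos})), I would then verify by the same computer-assisted comparison used throughout Section 5 that $|HB_1| > h_i$ for $P=P_1$, $g\ge 4$; for $P=P_2$, $g\ge 7$; and for $P=P_2^*$, $g\ge 3$, and symmetrically for $B_2C_6$ using $\angle HAB_2 = 3b$ (so that $|HB_2|\ge|HB_1|$ automatically, making the $B_2C_6$ case a consequence of the $B_1C_4$ case once one observes $C_6$ is at least as far from $OH$ as $C_4$). The main obstacle I anticipate is not the trigonometry but the bookkeeping of \emph{which} edge $B_1C_4$ actually is in each of the three tessellations — in particular confirming that the separation claim ``$B_1C_4$ lies entirely in the half-plane cut off by the line $AB_1$ on the far side from $O$'' genuinely holds, so that the inequality $d(x,B_1C_4)\ge d(x,B_1)$ is valid; this requires a careful look at Figure \ref{fig_4g_plus_2_gon_univ_2} and possibly an auxiliary angle estimate showing the edge $B_1C_4$ does not ``wrap back'' toward $OH$. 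If that separation fails in some model, the fallback is to compute $d(OH, B_1C_4)$ directly via a quadrilateral with two right angles (formula (\ref{for_birect_2})) exactly as in Lemma \ref{lem_abi}, at the cost of a messier closed form.
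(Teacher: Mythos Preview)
Your main approach contains a genuine error: you claim that $d(x,B_1C_4)\ge d(x,B_1)$, but this inequality goes the wrong way. Since $B_1$ is an \emph{endpoint} of the segment $B_1C_4$, one always has $d(x,B_1C_4)=\inf_{p\in B_1C_4}d(x,p)\le d(x,B_1)$. So even after you compute $|HB_1|$ exactly and verify $|HB_1|>h_i$ (which is indeed true, and is essentially Proposition \ref{prop_intersect_one_10}(1)), this gives no lower bound on $d(x,B_1C_4)$. Your separation heuristic --- that geodesics from $x$ to $B_1C_4$ must cross $AB_1$ ``beyond $B_1$'' --- does not yield the inequality you need; crossing a line far from $x$ says nothing about the length of the crossing geodesic. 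Your fallback plan via formula (\ref{for_birect_2}) could be made to work, but you have not carried it out, and it would be one more computer check rather than a clean argument.

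The paper's proof avoids all computation here by exploiting a structural fact you have overlooked: the common perpendicular between the two diameters $AA'$ and $B_1B_1'$ is precisely the segment $HH^*$ joining $H$ to its deck-translate $H^*\in B_1B_1'$, and its length is \emph{exactly} $h_i$. This is immediate from the construction of the systole candidate $\gamma_i$: the half $DE$ of $\gamma_i$ passes through $H$ perpendicular to $AA'$, and its image across the edge $AB_1$ passes through $H^*$ perpendicular to $B_1B_1'$; together they form a single geodesic segment of length $|DE|=h_i$. Hence $d(AA',B_1B_1')=h_i$, so $B(x,h_i)$ cannot cross $B_1B_1'$. Since the edge $B_1C_4$ lies on the far side of the line $B_1B_1'$ from $AA'$, the ball cannot meet $B_1C_4$; the case of $B_2C_6$ is symmetric via the diameter $B_2B_2'$. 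No trigonometric comparison or computer check is needed for this lemma.
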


		\begin{proof} The geodesic segment connecting the middle point of $AB_0$ and $AB_1$ is perpendicularly bisected by the diameter $AA'$; the geodesic segment connecting the middle point of $B_1A$ and $B_1C_4$ is perpendicularly bisected by the diameter $B_1B_1'$. The length of each segment is $h_i$ by Propostion 3. These two geodesic segments form a new geodesic segment which is perpendicular to both $AA'$ and $B_1B_1'$. Therefore, the distance between $AA'$ and $B_1B_1'$ is $h_i$. 
Thus $B(x,h_i)$ doesn't meet $B_1C_4$ since segment $B_1C_4$ is farthur to $AA'$ than the line $B_1B_1'$. 

		Similarly, distance between the diameters $B_2B_2'$ and $AA''$ is not smaller than $h_i$. It implies that $B(x,h_i)$ doesn't meet $B_2C_6$. 

		We remind the reader that the two endpoints of the common perpendicular between $AA'$ and $B_1B_1'$ are $H$ and $H$'s deck transformation image on $B_1B_1'$. This fact is useful in the proof of Corollary \ref{cor_number}. 
		\end{proof}
		
		\begin{lemma}
		$B(x,h_i)$ doesn't meet $B_1C_3$ and $C_2C_1$. 
		\end{lemma}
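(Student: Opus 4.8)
The plan is to prove, for each of the two edges $e\in\{B_1C_3,\ C_1C_2\}$ and in each of the three models ($P=P_1$ with $g\ge4$, $P=P_2$ with $g\ge7$, $P=P_2^*$ with $g\ge3$), that $d(x,e)>h_i$ for every $x\in OH$; this is exactly the assertion that $B(x,h_i)$ misses $e$. As in Lemmas \ref{lem_abi} and \ref{lem_x_b1c4}, the mechanism is to produce one geodesic line that separates $OH$ from $e$ and whose distance from $OH$ is already under control, and the natural candidate is the diameter $B_1B_1'$ of the polygon across the edge $AB_1$.

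First I would run the synthetic argument. In the proof of Lemma \ref{lem_x_b1c4} it was shown that the common perpendicular of the diameters $AA'$ and $B_1B_1'$ has length exactly $h_i$, passes through the midpoint $E$ of the edge $AB_1$, and has feet $H\in AA'$ and its deck image $H^*\in B_1B_1'$; in particular $d(AA',B_1B_1')=h_i$, attained on $AA'$ only at $H$, so $d(x,B_1B_1')\ge h_i$ for all $x\in OH$ with equality only at $x=H$. Next I would read off from the tessellation around $B_1$ (Figure \ref{fig_4g_plus_2_gon_univ_2}) that $B_1B_1'$ bisects the angle of the polygon across $AB_1$ at the vertex $B_1$, so that $AB_1$ and $B_1C_1$ lie on opposite sides of the line carrying $B_1B_1'$; consequently all edges further around $B_1$, among them $B_1C_3$, together with the whole boundary arc $B_1C_1C_2$ of that polygon, lie in the closed half-plane bounded by $B_1B_1'$ not containing $A$. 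Hence every geodesic from $x\in OH$ to a point of $e$ meets $B_1B_1'$, whence $d(x,e)\ge d(x,B_1B_1')\ge h_i$. Finally, equality for some $x\in OH$ would force $x=H$ and would force the nearest point of $B_1B_1'$ to $H$, namely $H^*$, to lie on $e$; but $H^*$ is interior to the polygon across $AB_1$ while $e$ is disjoint from that interior, so $H^*\notin e$ and the inequality is strict: $d(x,e)>h_i$ for every $x\in OH$.

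I expect the main obstacle to be verifying that the configuration around $B_1$ is really as described in all three models and for all admissible $g$ — in particular that the distinguished ``diameter'' of $P_2^*$ (the one perpendicular to the common edge of the two $(2g+1)$-gons) still bisects the vertex angle and still separates $AB_1$ from the edges $B_1C_3$ and $C_1C_2$, since the number of polygons meeting at a vertex, hence the cyclic order of the edges there, varies with $g$ and with the model. Wherever this separation picture is not transparent I would instead argue by a direct computation in the style of Proposition \ref{prop_intersect_one_10} and Lemma \ref{lem_abi}: drop the common perpendicular from $AA'$ (or from $B_1B_1'$) to the geodesic extending $e$, express its length together with $|AH|$, the half vertex angle $b$, and the relevant central angle $2ka$ via the right–triangle formulas \eqref{for_tri_1}, \eqref{for_tri_3}, \eqref{for_tri_5}, the trirectangle formula \eqref{for_trirect_3}, and the two–right–angle quadrilateral formula \eqref{for_birect_2}, so that $\inf_{x\in OH}d(x,e)$ becomes an explicit elementary function of $g$ for each model, and then check $\inf_{x\in OH}d(x,e)>h_i$ by computer for the small values of $g$ in range together with a monotonicity or asymptotic estimate for large $g$, exactly as the earlier lemmas do. Either route yields that $B(x,h_i)$ meets neither $B_1C_3$ nor $C_1C_2$, which together with Lemmas \ref{lem_dist_edges_10} and \ref{lem_x_b1c4} leaves only the edges $AB_1$, $AB_2$ (and $B_1C_1$) as possible intersections, completing the proof of Proposition \ref{prop_intersect_10}.
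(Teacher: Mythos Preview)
Your synthetic separation argument does not work. You claim that the line carrying the diameter $B_1B_1'$ separates $OH$ from $B_1C_3$ and from $C_1C_2$. It does not. At the vertex $B_1$ the edges are spaced by $2b$; with $B_1A$ at angle $0$, the diameter $B_1B_1'$ bisects the neighbouring polygon's angle and hence lies at angle $b$ (and $b+\pi$) from $B_1A$. The edge $B_1C_4$ sits at angle $+2b$, on the far side of this line --- that is exactly why the argument in Lemma~\ref{lem_x_b1c4} works for $B_1C_4$. But $B_1C_1$ sits at angle $-2b$ and $B_1C_3$ at angle $-4b$, both in the half-plane $(b-\pi,b)$, i.e.\ on the \emph{same} side of $B_1B_1'$ as $A$, $O$, $H$. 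The same holds for $C_1C_2$, which lies in the polygon across $B_1C_1$. So no geodesic from $x\in OH$ to a point of $B_1C_3$ or $C_1C_2$ is forced to cross $B_1B_1'$, and the inequality $d(x,e)\ge d(x,B_1B_1')$ is unjustified. Your own description ``$AB_1$ and $B_1C_1$ lie on opposite sides of the line carrying $B_1B_1'$'' is already false.

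The paper proceeds by the direct computation you mention only as a fallback: it applies the two--right--angle quadrilateral formula \eqref{for_birect_2} to the quadrilateral $A\,R_1\,R_2\,B_1$ (with $\angle A=b$, $\angle B_1=4b$, $|AB_1|=2\operatorname{arccosh}(\cos a/\sin b)$) to compute $d(AA',B_1C_3)$, and to the quadrilateral $O\,R_1\,R_2\,C_1$ (with $\angle O=4a$, $\angle C_1=3b$, $|OC_1|=\operatorname{arccosh}(\cot a\cot b)$) to compute $d(AA',C_1C_2)$, and then checks by computer that both exceed $h_i$ in all three models. That computation, not a separating diameter, is what carries the lemma.
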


\begin{proof}		First, we use Figure \ref{fig_trapezoid_model_1}, which is a part of Figure \ref{fig_4g_plus_2_gon_univ_2}, to calculate the distance between $B_1C_3$ and $AA'$ in Figure \ref{fig_4g_plus_2_gon_univ_2}. In Figure \ref{fig_trapezoid_model_1}, $AB_1$ is an edge of the polygon and therfore its length is $2\arccosh \cos a/\sin b$. $\angle A = b$ while $\angle B_1 = 4b$. Then the distance between $AA'$ and $B_1C_3$ (realized by $R_1R_2$ in the figure) is 
		\begin{eqnarray*}
			\cosh|R_1R_2| &=& \sin \angle A \sin \angle B_1 \cosh |AB_1|-\cos \angle A \cos \angle B_1 	\\
			&=& \sin b \sin 4b \left( 2\frac{\cos^2 a}{ \sin^2 b} -1 \right)-\cos b \cos 4b 	
		\end{eqnarray*} 
		 by  (\ref{for_birect_2}). By computer calculation, we know this distance is larger than $h_i$ in all of the three models. 

		\begin{figure}[htbp]
				\centering
				\begin{minipage}[t]{0.45\textwidth}
				\centering
				\includegraphics{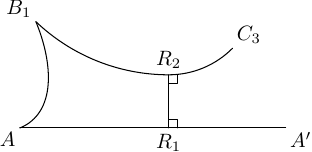}
				\caption{}
				\label{fig_trapezoid_model_1}
				\end{minipage}
				\begin{minipage}[t]{0.45\textwidth}
				\centering
				\includegraphics{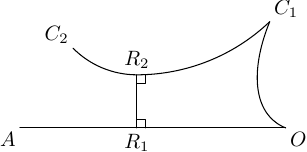}
				\caption{}
				\label{fig_trapezoid_model_2}
				\end{minipage}
		\end{figure}

		Then, we use Figure \ref{fig_trapezoid_model_2},   which is a part of Figure \ref{fig_4g_plus_2_gon_univ_2}, to calculate the distance between $C_1C_2$ and $AA'$ in Figure \ref{fig_4g_plus_2_gon_univ_2}.  In Figure \ref{fig_trapezoid_model_2}, Since $C_1$ is a vertex of the regular polygon containing $A$, $|OC_1|$ is $\arccosh \cot a \cot b$. $\angle O = 4a$ while $\angle C_1 = 3b$. Then the distance, realized by $R_1R_2$, between $AA'$ and $C_1C_2$ is 
		
		\begin{eqnarray*}
		\cosh |R_1R_2| &=& \sin \angle O \sin \angle C_1 \cosh |OC_1|-\cos \angle A \cos \angle B_1 \\
		&=& \cot a\cot b \sin 4a \sin 3b - \cos 4a \cos 3b
		\end{eqnarray*}
		by  (\ref{for_birect_2}). We compare this distance with $h_i$, finding that $|R_1R_2|$ is larger than $h_i$ in all of the three models. 
\end{proof}\end{proof}

Now we have proved Proposition \ref{prop_intersect_10}, that is $B(x, h_i)$ in the universal cover $\mathbb{H}^2$ is shown as in Figure \ref{fig_4g_plus_2_gon_univ_2}.

By projecting $B(x, h_i)$ from the universal cover to $\Sigma_g^i$, we will get a picture of $B(x, h_i)\subset P_i (P_i^*)\subset \Sigma_g^i$ shown as in Figure \ref{fig_4g_plus_2_gon_6}

\subsection{Back to $P$}

\begin{figure}[htbp]
		\centering
		\includegraphics{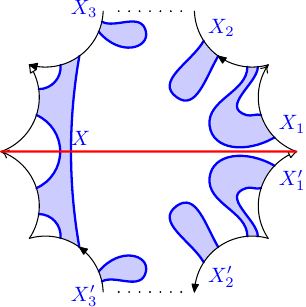}
		\caption{}
		\label{fig_4g_plus_2_gon_6}
\end{figure}

Now it is sufficient to prove that any two components in Figure \ref{fig_4g_plus_2_gon_6} does not intersect. 

Two components in the polygon do not intersect each other if and only if the corresponding disks (lifts of $B(x,h_i)$) in $\mathbb{H}^2$ does not intersect.

\begin{lemma}
The interior of $X_j$ does not meet the diameter $AA'$ for $j=1,2,3$
\label{lem_xi_dia}
\end{lemma}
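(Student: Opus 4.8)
The plan is to show that each disk component $X_j$ in Figure \ref{fig_4g_plus_2_gon_6}, which is the projection of a lift of $B(x,h_i)$ centered at a point $x\in OH$, stays on one side of the diameter $AA'$. By the setup of Proposition \ref{prop_intersect_10}, the upper half of any lift of $B(x,h_i)$ only crosses the edges $AB_1$, $AB_2$, $B_1C_1$ of the tessellation, so each component $X_j$ sits inside a narrow ``wedge'' near a single vertex or near the common edge $B_1C_1$; the claim is that the diameter $AA'$ does not penetrate the interior of such a wedge-bounded piece.

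First I would fix which lifts give rise to $X_1, X_2, X_3$: $X_1$ is (a piece of) $B(x,h_i)$ itself, lying in $P$ with center $x$ on $OH\subset AA'$, so trivially its interior meets $AA'$ — hence the lemma as stated must concern the \emph{other} components, i.e.\ $X_2$ and $X_3$ are the images of the neighboring lifts obtained by the edge-identifications of $P$ across the edges adjacent to the diameter. For those, the key quantitative input is already available: by Proposition \ref{prop_intersect_one_10}(2) every edge of $P$ other than the two nearest to $AA'$ is at distance $>h_i$ from $AA'$, and by Lemma \ref{lem_abi} the edges $AB_3$ and further edges through $A$ are at distance $>h_i$ from $OH$. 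So the only way $\mathrm{int}(X_j)$ could touch $AA'$ is if the corresponding lift $B(x',h_i)$ (with $x'$ the image of $x$ under a deck transformation pairing the relevant edges) reaches across to $AA'$. I would bound $d(x', AA')$ from below: $x'$ lies on the image of $OH$ under that deck transformation, and I would compute the distance from that image segment to $AA'$ using the right-angled-triangle and trirectangle formulae (\ref{for_tri_3}), (\ref{for_trirect_3}) exactly as in the proofs of Proposition \ref{prop_intersect_one_10} and Lemma \ref{lem_abi}, then compare with $h_i$ on the computer over the stated genus ranges.

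Concretely, the steps in order: (i) identify the deck transformations $\tau_j$ such that $X_j$ is the projection of $\tau_j(B(x,h_i))$, and note $\tau_j(x)\in\tau_j(OH)$; (ii) for $j=2,3$, express $d(\tau_j(OH), AA')$ via the common perpendicular, reusing the configuration of Figure \ref{fig_abi} / Figure \ref{fig_vert_to_diameter_2_10} — the relevant vertex $A$ or $B_1$ is shared, and the angle at that vertex is an odd multiple of $b$ (for the $AB_j$ side) so the formula $\sinh|Rx| = \sin((2j-1)b)\sinh|AH|$ applies; (iii) invoke Lemma \ref{lem_x_b1c4}: the common perpendicular between $AA'$ and $B_1B_1'$ has length exactly $h_i$ with foot $H$, so the lift across $AB_1$ has its copy of $OH$ on the far side of $B_1B_1'$, hence at distance $\ge h_i$ from $AA'$, with equality only at the single point $H$ which is on the boundary, not the interior; (iv) conclude $\mathrm{int}(\tau_j(B(x,h_i)))\cap AA' = \emptyset$ for $j=2,3$, and handle $j=1$ by the trivial observation that $X_1\cap AA'$ is a diameter of the disk $B(x,h_i)$ which meets $AA'$ only along $AA'$ itself (so "interior does not meet" should be read as: $X_1$ does not cross to the other side — its interior lies in the closed half-plane, meeting $AA'$ only in the chord through $x$). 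I would state this last point carefully to match the figure.

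The main obstacle I expect is bookkeeping rather than a new idea: pinning down precisely which geometric piece each $X_j$ is and which deck transformation produces it, so that the angle appearing in the trigonometric estimate is the right odd multiple of $a$ or $b$, and making sure the ``equality at $H$'' boundary case is phrased so it does not contradict the claim. The computer comparison itself is routine — it is the same family of inequalities $\sin((2k+1)a)\cos b/\sin a$ versus $\cosh h_i$, and $\sin((2j-1)b)\cot b\sqrt{(\cos^2 a-\sin^2 b)/(\cos^2 a+\cos^2 b)}$ versus $\sinh h_i$, already validated for the stated genus ranges in Proposition \ref{prop_intersect_one_10} and Lemma \ref{lem_abi}. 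Once Lemma \ref{lem_xi_dia} is in place, combined with Lemma \ref{lem_x_b1c4} and the edge-distance bounds, one gets that distinct components of Figure \ref{fig_4g_plus_2_gon_6} lie in disjoint wedges and therefore cannot intersect, which is exactly what is needed to finish.
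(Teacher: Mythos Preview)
Your proposal rests on a misreading of the labeling. In the paper the central component containing $x\in OH$ is called $X$, not $X_1$; the three pieces $X_1,X_2,X_3$ are \emph{all} folded-back fragments coming from deck-translates of $B(x,h_i)$ across nearby edges (compare Lemma~\ref{lem_xxi}, which separately shows $X_1,X_2,X_3$ do not meet $X$). So your ``trivial'' handling of $j=1$ is addressing a non-existent case, and your substantive argument only covers two of the three required pieces. In fact, the content of your step~(iii) --- that $d(AA',B_1B_1')=h_i$ with foot at $H$, via Lemma~\ref{lem_x_b1c4} --- \emph{is} precisely the paper's argument for $X_1$; you have the right idea there but have attached it to the wrong label.

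For $X_2$ and $X_3$ the paper's route is also different from what you sketch. You propose to bound $d(\tau_j(OH),AA')$ by recycling the edge-to-diameter and $OH$-to-edge estimates from Proposition~\ref{prop_intersect_one_10}(2) and Lemma~\ref{lem_abi}. Those estimates concern the wrong configurations: what is actually needed is the distance from $AA'$ to the \emph{image diameter} $\tau_j(AA')$ in $\mathbb{H}^2$ (namely $C_2C_2'$ for $X_2$ and $C_6C_7$ for $X_3$), and these are new quadrilateral computations using formula~(\ref{for_birect_2}) with vertex angles $b,2b$ (twice, summed) and $4a,4b$ respectively --- not odd multiples of $b$ at a shared vertex as you suggest. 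The formulas you cite do not directly yield these quantities, so the ``routine computer comparison'' you invoke would not, as written, establish the needed inequalities.
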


\begin{proof} $X_1$'s interior intersects $AA'$ in Figure \ref{fig_4g_plus_2_gon_6} if and only if in Figure \ref{fig_4g_plus_2_gon_univ_2}, $B(x,h_i)$'s interior intersects the diameter $B_1B_1'$. But $d(AA',B_1B_1')=h_i$. Thus $X_1$'s interior does not intersect $AA'$. 

To prove $X_2$ in Figure \ref{fig_4g_plus_2_gon_6} not meeting $AA'$, it is sufficient to prove in Figure \ref{fig_4g_plus_2_gon_univ_2}, the distance between $AA'$ and $C_2C_2'$ is larger than $h_i$. 

In Figure \ref{fig_x2a_10}, $d(AA',C_2C_2')$ is realized by segment $R_5R_6$. By using  (\ref{for_trirect_3}) in trirectangle with right angles $R_5, R_1, R_2$ and trirectangle with right angles $R_6, R_3, R_4$, we have $|R_5R_6|>|R_3R_4|+|R_1R_2|$. We calcuate $|R_3R_4|$ and $|R_1R_2|$ in the quadrilateral $R_3R_4B_1A$ and $R_1R_2C_1C_2$ respectively by  (\ref{for_birect_2}).
$\cosh |R_3R_4| = \cosh |AB_1| \sin \angle B_1AR_3\sin \angle AB_1R_4 - \cos \angle B_1AR_3\cos \angle AB_1R_4$ and $\cosh |R_1R_2| = \cosh |C_1C_2|\sin \angle R_1C_2C_1 \sin \angle C_2C_1R_2 - \cos \angle R_1C_2C_1 \cos \angle C_2C_1R_2$. 
Here $\angle B_1AR_3= \angle R_1C_2C_1 = b$, $\angle AB_1R_4= \angle C_2C_1R_2 = 2b$, $|AB_1|=|C_1C_2|=2\arccosh \cos b/\sin a $.
Therefore, 
\[
		\cosh |R_1R_2| = \cosh |R_3R_4| = \left( 2\frac{\cos^2 a}{\sin^2 b} -1 \right)\sin 2b \sin b - \cos 2b \cos b .
\]
We calculate $|R_1R_2|$ and $|R_3R_4|$ by computer, then compare $|R_1R_2| + |R_3R_4|$ with $h_i$ and get the following conclusion: 
 $|R_3R_4|+|R_1R_2|>h_i$, and $X_2\cap AA'= \emptyset$ in Figure \ref{fig_4g_plus_2_gon_6} for all of the three models. 

\begin{figure}[htbp]
		\centering
		\includegraphics{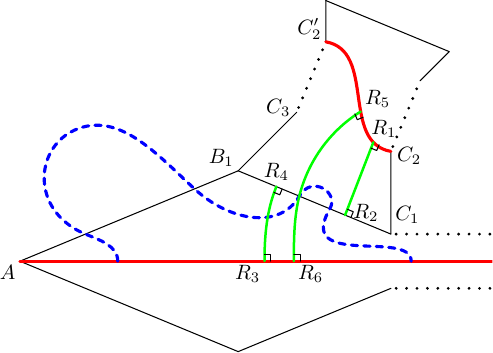}
		\caption{}
		\label{fig_x2a_10}
\end{figure}

$X_3$ in Figure \ref{fig_4g_plus_2_gon_6} does not intersect $AA'$ if and only if $B(x,h_i)$ in Figure \ref{fig_4g_plus_2_gon_univ_2} does not intersect the diameter $C_6C_7$. $B(x,h_i)$ in Figure \ref{fig_4g_plus_2_gon_univ_2} does not intersect the diameter $C_6C_7$, if $d(AA',C_6C_7)>h_i$. 

We use Figure \ref{fig_x3a_10} to calculate $d(AA',C_6C_7)$. Figure \ref{fig_x3a_10} is a part of Figure \ref{fig_4g_plus_2_gon_univ_2}. $O'$ is the center of the regular polygon it is in. In the quadrilateral $AO'R_2R_1$, $\angle A= 4b ,\angle O'= 4a$, $|O'A|=\arccosh \cot a \cot b$ by Proposition \ref{prop_length_10}. Then by  (\ref{for_birect_2}), 
\begin{align}
		 \cosh |R_1R_2| &= \cosh |AO'| \sin \angle A \sin \O' - \cos A \cos O' \nonumber\\
		 &= \cot a \cot b \sin 4a \sin 4b - \cos 4a \cos 4b. \label{for_r1r2}
\end{align}
 By the help of the computer, we compare $|R_1R_2|$ and $h_i$, finding that $|R_1R_2|$ is larger than $h_i$ for all of the three models. 

\begin{figure}[htbp]
		\centering
		\includegraphics{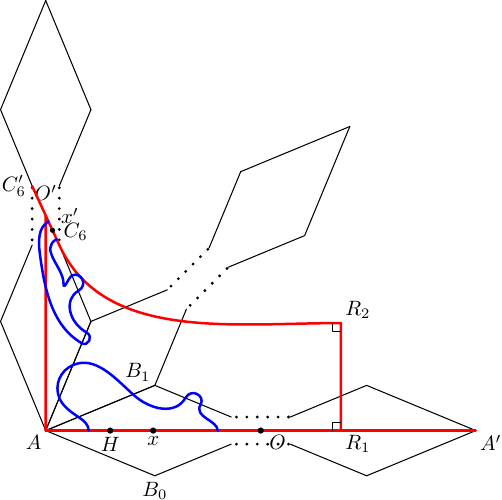}
		\caption{}
		\label{fig_x3a_10}
\end{figure}
\end{proof}

\begin{figure}[htbp]
		\centering
		\includegraphics{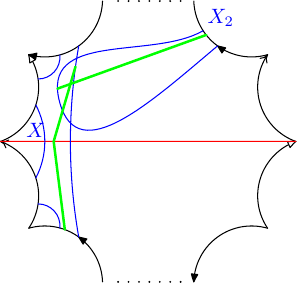}
		\caption{}
		\label{fig_xxi_10}
\end{figure}

\begin{lemma}
In Figure \ref{fig_4g_plus_2_gon_6}, 
 $X_1, X_2, X_3$ does not intersect $X$. 	
 \label{lem_xxi}
\end{lemma}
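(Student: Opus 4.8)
\textbf{Proof proposal for Lemma \ref{lem_xxi}.}

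The plan is to reduce the non-intersection of $X$ with each $X_j$ ($j=1,2,3$) to a distance estimate between two geodesic lines (or segments) in $\mathbb{H}^2$, exactly in the spirit of the previous lemmas: two components of $B(x,h_i)$ in Figure \ref{fig_4g_plus_2_gon_6} fail to intersect precisely when the corresponding lifted disks of radius $h_i$ in $\mathbb{H}^2$ are disjoint, and two such disks centered at $x_1,x_2$ are disjoint as soon as $d(x_1,x_2)>2h_i$, or more efficiently as soon as the geodesic lines through those centers along which $x$ ranges are at distance $>2h_i$ from one another (since each center lies on a translate of the segment $OH\subset AA'$). So the first step is bookkeeping: identify, for each $j$, which deck-translate of the diameter $AA'$ carries the center of $X_j$ — here I would use that $X_1$ sits across the edge $B_1C_1$ so its center lies on $B_1B_1'$, $X_2$ sits across $C_1C_2$ so its center lies on $C_2C_2'$, and $X_3$ sits across a further edge with center on $C_6C_7$ — these being exactly the three diameters already named in the proof of Lemma \ref{lem_xi_dia}.

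The second step is the computation proper. For the pair $(X,X_1)$ I would invoke Lemma \ref{lem_x_b1c4}, where it is shown that the common perpendicular between $AA'$ and $B_1B_1'$ has length $h_i$ and runs between $H$ and the deck image of $H$ on $B_1B_1'$. Since $x$ ranges over $OH$ (so $x$ is on the far side of $H$ from the foot on $B_1B_1'$) and the center of $X_1$ is the corresponding image point on $B_1B_1'$, the distance between the two disk centers is at least $|Hx|+h_i+|H'x'|\ge h_i$, and one checks (by the same hyperbolic right-triangle / quadrilateral formulae, Prop. \ref{prop_length_10} and formula (\ref{for_birect_2})) that it in fact exceeds $2h_i$; this is a one-line computer comparison. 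For $(X,X_2)$ and $(X,X_3)$ I would set up the analogous right-angled quadrilaterals whose two opposite sides lie on $AA'$ and on $C_2C_2'$, respectively on $AA'$ and on $C_6C_7$ — the relevant numbers ($|AO'|=\arccosh\cot a\cot b$, the angles $4a$, $4b$, etc.) are already recorded in the proof of Lemma \ref{lem_xi_dia}, in particular formula (\ref{for_r1r2}) — add the component of $|Hx|$ on the $AA'$ side using (\ref{for_tri_3}) as in Lemma \ref{lem_abi}, and compare the total with $2h_i$ by computer for $P=P_1$ ($g\ge4$), $P=P_2$ ($g\ge7$), $P=P_2^*$ ($g\ge3$).

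The main obstacle I expect is not any single formula but getting the reduction geometrically correct: one must be sure that the ``upper half'' of $B(x,h_i)$ described in Proposition \ref{prop_intersect_10} really does place the centers of $X,X_1,X_2,X_3$ on the four specific diameters claimed, and that the relevant distance to bound is between the two half-disks rather than the full disks — otherwise the naive bound $d(\text{centers})>2h_i$ is both too weak to be true and stronger than needed. The clean way around this is to bound instead the distance between the two \emph{bounding geodesics} carrying the centers (the translates of $AA'$), intersected with the relevant fundamental strip, and to note that each $X_j$ lies entirely on one side of its diameter while $X$ lies on one side of $AA'$; then disjointness follows from $d(AA',\ell_j)\ge h_i$ together with the observation that the two half-disks open away from each other, which is visible in Figure \ref{fig_4g_plus_2_gon_6}. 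Once that framing is fixed, every remaining inequality is a routine monotone comparison in $g$ handled by the same computer check used throughout Section 5.
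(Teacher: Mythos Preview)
Your proposal has a genuine gap in two places, and the paper's proof takes a rather different route for the easy cases.

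\textbf{The cases $X\cap X_1$ and $X\cap X_2$.} Your lower bound $d(x,x')\ge |Hx|+h_i+|H'x'|$ is not valid: the right-hand side is the length of a broken path through the feet of the common perpendicular, hence an \emph{upper} bound for $d(x,x')$, not a lower one. The only lower bound the common-perpendicular picture gives for free is $d(x,x')\ge d(AA',B_1B_1')=h_i$, which is far short of $2h_i$. To salvage your approach you would have to locate $\sigma^{-1}(x)$ precisely on $B_1B_1'$ --- and it is \emph{not} near the foot $H^*$: since the opposite-edge gluing sends $A'\mapsto B_1$, the point $\sigma^{-1}(H)$ sits at distance $|HA'|$ from $B_1$, i.e.\ essentially at the far end of $B_1B_1'$, and then one must compute $d(H,\sigma^{-1}(H))$ via the hyperbolic Pythagorean relation and still minimise over $x\in OH$. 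The paper bypasses all of this with a one-line observation you missed: because opposite edges are identified, the disk pieces $X_1,X_2$ re-enter $P$ through the edges \emph{opposite} to those the disk exited; if $X$ met $X_1$ at some $p$, the two radii of $B(x,h_i)$ reaching $p$ (one direct, one via the identification) together produce a path of length $\le 2h_i$ between a pair of opposite edges. But the inradius $|OD|=\arccosh(\cos b/\sin a)$ exceeds $h_i$ for $g\ge4$, so opposite edges are more than $2h_i$ apart. That is the whole argument for $j=1,2$.

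\textbf{The case $X\cap X_3$.} Here your identification is correct and matches the paper: one must show $d(x,x')>2h_i$ with $x'=\tau_3(x)\in C_6C_7$. But this is not ``a one-line computer comparison'': the distance $\cosh|xx'|=f(t)$ depends on $t=|Ax|$ through formula~(\ref{for_birect_1}), and you must minimise $f$ over $t\in[|AH|,|AO|]$. The paper computes $f'(t)$, finds the critical point $t=(|AR_1|-|C_6R_2|)/2$, and then --- because the minimiser need not lie in $[|AH|,|AO|]$ and because $a\ne b$ in the $P_2,P_2^*$ models --- introduces an auxiliary symmetric quadrilateral $R_1R_2S_2S_1$ and comparison points $y,y'$ to reduce to a single trirectangle computation of $|yy_0|$. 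Your sketch does not engage with this minimisation at all; ``add the component of $|Hx|$ \dots\ and compare'' is not enough.

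Your fallback in the third paragraph (half-disks opening away from their diameters, combined with $d(AA',\ell_j)\ge h_i$) does not rescue the argument either: when $x=H$ the piece $X_1$ is tangent to $AA'$ (this is exactly the content of Corollary~\ref{cor_number}), so $X_1$ is not strictly on one side of $AA'$, and the half-space separation fails.
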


\begin{proof}

(1) The distance between the polygon's center and an edge is $\cos b/\sin a$ by Proposition \ref{prop_length_10}. Then we compare it with $h_i$ by computer and the conclusion follows. For $g\ge 4$, distance between the polygon's center and edges are bigger than $h_i$.	If $X$ intersects $X_1$ or $X_2$, there are two radii of $B(x,h_i)$ that intersect. It imples that the distance between two opposite edges is smaller or equal to $2h_i$, which is impossible. (See Figure \ref{fig_xxi_10}. )

(2) $X\cap X_3 = \emptyset$ in Figure \ref{fig_4g_plus_2_gon_6} is equivalent to in Figure \ref{fig_4g_plus_2_gon_univ_2}, the distance between $x$ on $AA'$ and its deck transformation image on $C_6C_7$ is larger than $2h_i$. (See Figure \ref{fig_x3a_10}) Now we prove this distance is larger than $2h_i$ by the formulae of the quadrilateral: 

$x$ is a point moving on the diameter $AA'$ between $H$ and $O$. $x'$ is its deck transformation image. $x'$ moves as $x$ moves. We calculate $\inf_{x\in HO} d(x,x')$ and compare it with $2h_i$. 

In Figure \ref{fig_x3a_10}, we have obtained $|R_1R_2|$ in (\ref{for_r1r2}). Then in the quadrilateral $R_1R_2x'x$, by  (\ref{for_birect_1}), 
\[
		\cosh |xx'| = \cosh |R_1R_2| \cosh |R_1x| \cosh |R_2x'| - \sinh |R_1x| \sinh |R_2x'|. 
\] 

Assume $|Ax| = t$, then $|R_1x| = |AR_1| - t$ and $|R_2x'| = |R_2C_6|+t$. Here $|C_6x'| = |Ax| =t$.  $|AH|\le t \le |AO|$. Here $H$ is the intersecting point of the diameter and the geodesic connecting the mid-point of $AB_0$ and $AB_1$. 

Then we denote $\cosh |xx'|$ to be $f(t)$. 
\[
		f(t) = \cosh |R_1R_2| \cosh (|AR_1|-t) \cosh (|C_6R_2|+t) -\sinh (|AR_1|-t) \sinh (|C_6R_2|+t).
\] 
Thus 
\begin{eqnarray*}
		f'(t) &=& \cosh |R_1R_2| (-\sinh (|AR_1|-t) \cosh (|C_6R_2|+t) + \cosh (|AR_1|-t) \sinh (|C_6R_2|+t) ) - \\ 
		& &(-\cosh (|AR_1|-t) \sinh (|C_6R_2|+t) + \sinh (|AR_1|-t) \cosh (|C_6R_2|+t)) \\
		&=& (\cosh |R_1R_2| -1) \sinh (2t - |AR_1| + |C_6R_2|). 
\end{eqnarray*}

Therefore, the minimum of $f(t)$ is obtained when $t = (|AR_1| - |C_6R_2|)/2$. At this point, $|R_1x| = |R_2x'| = (|AR_1| + |C_6R_2|)/2$ and thus quadrilateral $xx'R_2R_1$ is a quadrilateral with $\angle R_1 = \angle R_2 =\pi/2$, $\angle x = \angle x'$. Moreover, $|Ax| + |O'x'| = |AO|$. ($|O'x'| = |O'C_6| - |C_6x'| = |AO|- |Ax|$. )

To prove $|xx'|>2h_i$, we construct a quadrilateral $R_1R_2S_2S_1$.(See Figure \ref{fig_compare_xx_yy}) In the quadrilateral $R_1R_2O'A$, $\angle A = 4b$ and $\angle O' = 4a$. Without loss of generality, we assume $b\ge a$. $S_1\in AR_1$, $S_2 \in O'R_2$, satisfying $\angle S_1S_2R_2 = \angle S_2S_1R_1 = 4b$. (We remark that when $a=b$, $S_1=A$ and $S_2=O'$). Such $S_1$ and $S_2$ exist on the segment $R_1A$ and $R_2O'$ respectively instead of on the extended lines of $R_1A$ and $R_2O'$ because otherwise we'll get a quadrilateral with sum of interior angles bigger than $2\pi$ or a triangle with sum of interior angles bigger than $\pi$, which is impossible.  

\begin{figure}[htbp]
		\centering
		\includegraphics{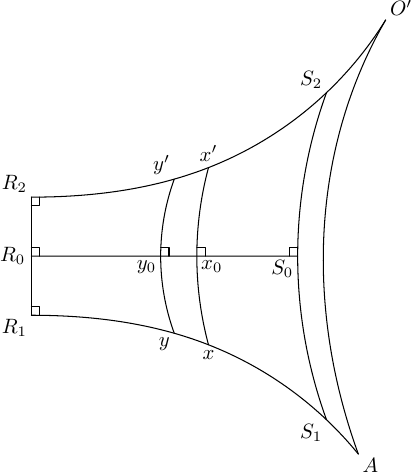}
		\caption{}
		\label{fig_compare_xx_yy}
\end{figure}

Then we pick up two points $y$, $y'$ on $R_1S_1$ and $R_2S_2$ respectively, satisfying $|S_1y| = |S_2y'| = |AO|/2$. Then $|R_1x|\ge|R_1y|$ and $|R_2x'|\ge|R_2y'|$ by the following argument: We have $|R_1A|\ge |R_1S_1|$, $|R_2O'|\ge |R_2S_2|$ and $|Ax|+|O'x'| =|AO|$. Without loss of generality, we assume $|Ax|\le|AO|/2\le|O'x'|$. Then $|R_1y| = |R_1S_1| - |S_1y| = |R_1S_1| - |AO|/2\le |R_1A| - |Ax| = |R_1x|$. Then $|R_2y'|\le |R_2x'|$ follows from $|R_1x| = |R_2x'|$ and $|R_1y| = |R_2y'|$. This result means $|xx'|\ge|yy'|$ by  (\ref{for_birect_2}). 

To prove $|xx'|\ge 2h_i$, it is sufficient to prove $|yy'|\ge 2h_i$. Now we calculate $|yy'|$ and compare it with $2h_i$. 
By the symmetry of the quadrilaterals $R_1R_2y'y$, $R_1R_2x'x$ and $R_1R_2S_2S_1$, the segment $R_0S_0$ divides each of these quadrilaterals into two equal trirectangles, namely $R_0R_1yy_0$ and $R_0R_2y'y_0$; $R_0R_1xx_0$ and $R_0R_2x'x_0$; $R_0R_1S_1S_0$ and $R_0R_2S_2S_0$. Here $R_0$, $y_0$, $x_0$ and $S_0$ are the middle points of $R_1R_2$, $yy'$, $xx'$ and $S_1S_2$ respectively.  Thus $|yy'|\ge 2h_i$ is equivalent to $|yy_0|\ge h_i$. Now we calculate $|yy_0|$.

By our construction, in the trirectangle $R_1R_0S_0S_1$, $\angle R_0 = \angle R_1 = \angle S_0 = \pi/2$, $\angle S_1 = 4b$. $|R_1R_2|$ is obtained in (\ref{for_r1r2}). Then we get $\coth \frac{|R_1R_2|}{2}$:  
\begin{align*}
		\cosh \frac{|R_1R_2|}{2} &= \sqrt{\frac{\cosh |R_1R_2|+1}{2}} \\
		&= \sqrt{\frac{\cot a \cot b \sin 4a \sin 4b - \cos 4a \cos 4b +1}{2}},
\end{align*}

\begin{align*}
		\sinh \frac{|R_1R_2|}{2} &= \sqrt{\frac{\cosh |R_1R_2|-1}{2}} \\
		&= \sqrt{\frac{\cot a \cot b \sin 4a \sin 4b - \cos 4a \cos 4b -1}{2}} ,
\end{align*}

\begin{align*}
		\coth \frac{|R_1R_2|}{2} &= \cosh \frac{|R_1R_2|}{2}/\sinh \frac{|R_1R_2|}{2} \nonumber \\
		&= \sqrt{\frac{\cot a \cot b \sin 4a \sin 4b - \cos 4a \cos 4b +1}{\cot a \cot b \sin 4a \sin 4b - \cos 4a \cos 4b -1}}. 
\end{align*}

Then by  (\ref{for_trirect_6}), we obtain $|R_1S_1|$: 
\begin{align*}
		\sinh |R_1S_1| &= \coth \frac{|R_1R_2|}{2} \cot \angle S_0S_1R_1 \\
		&=  \coth \frac{|R_1R_2|}{2} \cot 4b
\end{align*}

Then we obtain $|R_1y|$:
\begin{align*}
		\cosh |R_1y| &= \cosh (|R_1S_1| - |S_1y|) \\
		&= \cosh (|R_1S_1| - |AO|/2) \\
		&= \cosh |R_1S_1| \cosh \frac{|AO|}{2} - \sinh|R_1S_1| \sinh\frac{|AO|}{2}
\end{align*}
Here $|AO| = \arccosh (\cot a \cot b)$. 

Finally, we get $|yy_0|$ by  (\ref{for_trirect_5}):
\begin{align*}
		\sinh |yy_0| = \sinh |R_0R_1| \cosh |R_1y|
\end{align*}.

By computer programming, we compare $yy_0$ and $h_i$, finding that $yy_0 > h_i$  and therefore $X\cap X_3 = \emptyset$ in Figure \ref{fig_x3a_10}. 

We remark that the proof above also proves $X_1\cap X_2 = \emptyset$, see Figure \ref{fig_x3a_10}. 

\end{proof}

\begin{lemma}
In Figure \ref{fig_4g_plus_2_gon_6}, 
		$X_1, X_2, X_3$ do not intersect each other when $g\ge 4$. 
		\label{lem_xixj}
\end{lemma}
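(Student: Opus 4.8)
The plan is to translate each pairwise non-intersection statement in Figure \ref{fig_4g_plus_2_gon_6} into a statement about the corresponding lifted disks $B(x,h_i)$ and their deck-translates in $\mathbb{H}^2$ (as in Figure \ref{fig_4g_plus_2_gon_univ_2}), exactly as was done for the pairs involving $X$ in Lemma \ref{lem_xxi}. Concretely, two components $X_j$ and $X_k$ in the polygon fail to intersect if and only if the distance in $\mathbb{H}^2$ between the point $x\in AA'$ and the appropriate deck-transformation image $x''$ lying on the diameter associated with $X_k$ exceeds $2h_i$; equivalently, since each $X_j$ is a half of $B(x,h_i)$ cut off by a diameter, it suffices that the distance between the two relevant diameters (or between $x$ and $x''$, taking the infimum over $x\in HO$) is at least $2h_i$.

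First I would record that the proof of Lemma \ref{lem_xxi}(2) already yields $X_1\cap X_2=\emptyset$ — this was the remark at the end of that proof, obtained by bounding $\inf_{x\in HO} d(x,x')$ between the copies of $x$ on $AA'$ and on $C_6C_7$ from below by $|yy'|\ge 2h_i$. So the new content is $X_1\cap X_3=\emptyset$ and $X_2\cap X_3=\emptyset$. For each of these I would set up the analogous quadrilateral: identify the diameter carrying the relevant deck-image (for $X_1$ versus $X_3$ this is $B_1B_1'$ versus $C_6C_7$, for $X_2$ versus $X_3$ it is $C_2C_2'$ versus $C_6C_7$), compute the common perpendicular length between the two diameters using the two-right-angle quadrilateral formulae (\ref{for_birect_1}) and (\ref{for_birect_2}) together with Proposition \ref{prop_length_10}, and then run the same monotonicity argument: writing $\cosh|xx''|$ as a function $f(t)$ of the parameter $t=|Ax|$ along $AA'$, one differentiates to see $f'(t)=(\cosh|R_1R_2|-1)\sinh(2t-\text{const})$, so the minimum is attained at the symmetric configuration, where the same comparison quadrilateral $R_1R_2S_2S_1$ and the auxiliary points $y,y'$ give $|xx''|\ge|yy'|=2|yy_0|$, and finally $|yy_0|\ge h_i$ is checked by the explicit formula obtained from (\ref{for_trirect_6}), (\ref{for_trirect_5}) and a computer comparison, valid for $g\ge 4$.

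The main obstacle is bookkeeping rather than a new idea: one must correctly locate, for each pair $(X_j,X_k)$, which deck-translate of $B(x,h_i)$ is relevant and on which diameter its center sits, and then confirm that the monotonicity-plus-comparison machinery of Lemma \ref{lem_xxi} applies verbatim (in particular that the relevant angles are again of the form $4a$ and $4b$, or can be bounded by such, so that the comparison quadrilateral $R_1R_2S_2S_1$ still has the needed angle inequalities). Once the configuration is pinned down, each case reduces to a single inequality of the type already verified numerically in Lemma \ref{lem_xxi}, and the restriction $g\ge 4$ is exactly what makes the distance between opposite edges large enough for these bounds to hold. I would therefore present the argument as: (i) reduce to the three pairwise distance inequalities in $\mathbb{H}^2$; (ii) dispatch $X_1\cap X_2$ by citing the remark in Lemma \ref{lem_xxi}; (iii) for the remaining two pairs, set up the quadrilateral, invoke the $f'(t)$ computation to reduce to the symmetric case, and then invoke the comparison with $|yy'|$ and the final numeric check, noting that these bounds require $g\ge 4$.
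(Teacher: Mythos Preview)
Your handling of $X_1\cap X_2$ matches the paper exactly: both cite the remark at the end of Lemma \ref{lem_xxi}. For the remaining two pairs, however, the paper takes a genuinely different and much shorter route.

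Rather than setting up new quadrilaterals between the diameters carrying the deck-translates and rerunning the $f'(t)$ monotonicity and $|yy'|$ comparison, the paper finds a single separating geodesic inside the polygon. In the $P_1$ model it takes the diameter $BB'$ perpendicular to $AA'$; in $P_2$ the diameter making the largest angle with $AA'$; in $P_2^*$ the edge separating the two $(2g+1)$-gons. It then argues, for each $j=1,2,3$, that $X_j$ cannot reach $BB'$: the center of the ball whose spillover is $X_j$ lies outside $P$, so every point of $X_j$ is within $h_i$ of the edge $CD$ that $X_j$ crosses, while for $g\ge 4$ that edge is at least the third-nearest to $BB'$, hence $d(BB',CD)>h_i$ by Proposition \ref{prop_intersect_one_10}(2) (or Lemma \ref{lem_dist_edges_10} in the $P_2^*$ case). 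Since $X_1,X_2$ lie on one side of $BB'$ and $X_3$ on the other, the conclusion follows.

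Your approach is not wrong in spirit, but it is heavier and the details you defer are not as routine as you suggest. To test $X_1\cap X_3$ directly you would need the distance between $g_1^{-1}(x)$ and $g_3^{-1}(x)$, i.e.\ between $x$ and $g_1g_3^{-1}(x)$; the relevant diameter is the image of $AA'$ under the \emph{composite} deck transformation, not simply $B_1B_1'$ or $C_6C_7$ as you wrote, and the angles at the vertices of the resulting quadrilateral need not be $4a$ and $4b$. This is fixable, but the paper's separation argument sidesteps all of it with a one-line appeal to already-proved distance bounds.
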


\begin{proof}
		The proof of $X\cap X_3 = \emptyset$ in Lemma \ref{lem_xxi} also proves $X_1\cap X_2 = \emptyset$. 
Below we  prove  (1) $X_2\cap X_3=\emptyset$ (2) $X_1\cap X_3=\emptyset$. 

In $P_1$ model, we pick the diameter perpendicular to $AA'$ (denoted $BB'$). Now we prove that $X_1, X_2, X_3$ cannot meet $BB'$, and therefore $X_1\cap X_3=\emptyset$ and $X_2\cap X_3=\emptyset$. (See Figure \ref{fig_4g_gon_10}. )

\begin{figure}[htbp]
		\centering
		\begin{minipage}[t]{0.45\textwidth}
				\centering
				\includegraphics[scale=0.9]{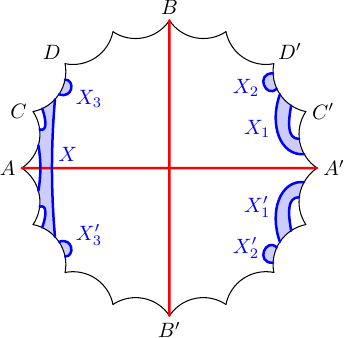}
				\caption{}
				\label{fig_4g_gon_10}
		\end{minipage}
		\begin{minipage}[t]{0.45\textwidth}
				\centering
				\includegraphics[scale=0.9]{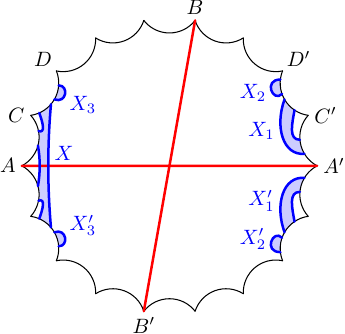}
				\caption{}
				\label{fig_4g_plus_2_gon_11}
		\end{minipage}
\end{figure}
 The proof of $X_i\cap BB' = \emptyset$ are exactly the same for $i = 1,2,3$.
Without loss of generality, we prove that $X_3 \cap BB' = \emptyset$. , In Figure \ref{fig_4g_gon_10}, we denote the edge that meets $X_3$ to be $CD$. Since $g\ge 4$, the edge $CD$ is not the nearest  and the second nearest edge to the diameter $BB'$, by Proposition \ref{prop_intersect_one_10} (2), $d(BB',CD)>h_i$. On the other hand, the center $x'$ of the $B(x',h_i)$ containing $X_3$ is outside the polygon in Figure \ref{fig_4g_gon_10}. Therefore, $\forall x_3 \in X_3$, $d(x_3,CD)\le h_i$. Thus $X_3\cap BB' = \emptyset$. 

In $P_2$ model,  let $BB'$ be one of the diameters whose angle with $AA'$ is the biggest, see Figure \ref{fig_4g_plus_2_gon_11}. 
Since $g\ge 4$ and $P_2$ is 4g+2 gon, we still can apply Proposition \ref{prop_intersect_one_10} (2) to prove $X_i\cap BB' = \emptyset$ for $i = 1,2,3$ exactly as $P_1$ case, and then $X_1 \cap X_3 = \emptyset$, $X_2 \cap X_3 = \emptyset$. 

\begin{figure}[htbp]
		\centering
		\includegraphics{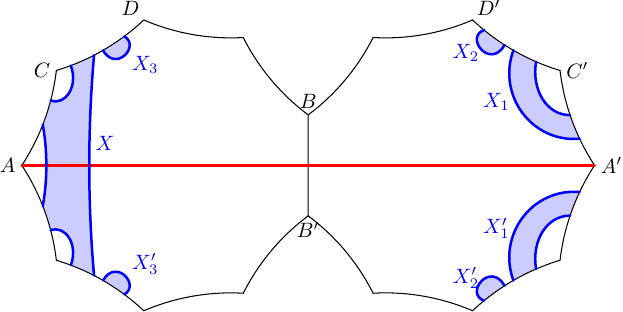}
		\caption{}
		\label{fig_4g_plus_2_gon_5_11}
\end{figure}

In $P_2^*$ model, we let $BB'$ be the edge seperate the two $2g+1$ polygons and the edge that meets $X_3$ be $CD$ (See Figure \ref{fig_4g_plus_2_gon_5_11}) Since $g\ge 4$, $2g+1\ge 9$, the edge $CD$ and the edge $BB'$ are disjoint. Then by Lemma \ref{lem_dist_edges_10}, $d(BB',CD)>h_i$. On the other hand, the center $x'$ of the $B(x',h_i)$ that corresponds to $X_3$ is outside the polygon in Figure \ref{fig_4g_plus_2_gon_5_11}. Therefore, $\forall x_3 \in X_3$, $d(x_3,CD)\le h_i$. Thus $X_3\cap BB' = \emptyset$. 

By exactly the same proof, we have $X_i\cap BB' = \emptyset$ for $i = 1,2$. So that $X_1 \cap X_3 = \emptyset$ and $X_2 \cap X_3 = \emptyset. $

We have proved the theorem.

\end{proof}

		\setcounter{cor}{0}
		Now we begin to prove Corollary \ref{cor_number}. 
\begin{cor}
In $\Sigma_g^1$, there are $2g$ closed geodesics having length  $\sys(\Sigma_g^1)$
and in $\Sigma_g^2$, there are $2g+1$ closed geodesics having length  $\sys(\Sigma_g^2)$.
		\label{cor_number}
\end{cor}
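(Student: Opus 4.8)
The plan is to exhibit the claimed collections of geodesics explicitly using the cyclic symmetry, and then to argue no others can occur by invoking the machinery already built for the injective radius. First I would recall that the geodesic $\gamma_i = DEE'D'$ constructed in Section \ref{subsect_cal} has length exactly $2h_i = \sys(\Sigma_g^i)$, by Proposition \ref{prop_length_10}. Let $\sigma_i$ be the generator of the cyclic symmetry group $G_i$, of order $4g$ for $i=1$ and $4g+2$ for $i=2$; since $\sigma_i$ is an isometry, each image $\sigma_i^k(\gamma_i)$ is again a shortest closed geodesic. The task is to count the distinct images. For $i=1$: $\gamma_1$ sits as the pair of segments $DE,\,E'D'$ adjacent to the diameter $AA'$ of $P_1$; the stabilizer of the unordered pair $\{$the two edges meeting $\gamma_1\}$ inside $G_1$ has order $2$ (the rotation by $\pi$ about $O$, which swaps $DE$ with $E'D'$), so the orbit of $\gamma_1$ has size $4g/2 = 2g$. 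For $i=2$, one works in the model $P_2$ (the $(4g+2)$-gon): again the half-turn about $O$ preserves $\gamma_2$, the stabilizer has order $2$, and the orbit has size $(4g+2)/2 = 2g+1$. This produces the asserted lower bounds of $2g$ and $2g+1$ systolic geodesics.

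Next I would show these are \emph{all} the systolic geodesics, and this is where I expect the real work to lie. The key fact, already available, is that every closed shortest geodesic must cross a diameter of $P_i$ (the observation preceding Claim \ref{claim_med}), and more precisely, for $x$ on the diameter $AA'$, Proposition \ref{prop_intersect_10} pins down exactly which edges of the tessellation the ball $B(x,h_i)$ meets — only $AB_1$, $AB_2$, $B_1C_1$ in Figure \ref{fig_4g_plus_2_gon_univ_2}. A closed geodesic of length $2h_i$ through a point $x \in AA'$ corresponds, in the universal cover, to a geodesic segment of length $2h_i$ from a lift $\tilde x$ to a deck-translate $\tilde x'$; the midpoint of that segment has injective radius exactly $h_i$ and hence also lies over $AA'$ (after applying a symmetry), so the segment is a common perpendicular situation. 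The content of Lemma \ref{lem_x_b1c4} and its proof is precisely that the only deck translate of a point of $AA'$ at distance $\le 2h_i$ is the one realized through $H$, via the common perpendicular between $AA'$ and $B_1B_1'$ whose feet are $H$ and the deck-image of $H$. Thus the \emph{only} systolic geodesic meeting $AA'$ is $\gamma_i$ itself (it must pass through $H$, the unique realizing point). Spelling this out: I would argue that if a shortest geodesic meets $AA'$ at $x$, then $x=H$, because for $x$ in the interior of $OH$ or $HA$ the distance from $x$ to every deck-translate of $x$ exceeds $2h_i$ by the computations in Lemmas \ref{lem_x_b1c4}, \ref{lem_xi_dia}, \ref{lem_xxi} (the functions $f(t)$ there are shown to stay above $4h_i^2$ in $\cosh$, i.e. the distance exceeds $2h_i$, with equality only at the endpoint corresponding to $H$).

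Having established that every systolic geodesic passes through one of the symmetric copies of the point $H$ and locally looks like $\gamma_i$ there, I would conclude by the symmetry count: the points playing the role of $H$ form a single $G_i$-orbit, of size $2g$ (resp. $2g+1$) — namely the midpoints of the common perpendiculars between consecutive diameters — and through each such point passes exactly one shortest geodesic, giving a bijection between systolic geodesics and this orbit. Combined with the explicit family from the first paragraph, which has exactly that cardinality, the count is exact: $2g$ systolic geodesics in $\Sigma_g^1$ and $2g+1$ in $\Sigma_g^2$. The main obstacle is the middle step: verifying rigorously that no shortest geodesic can meet $AA'$ anywhere except at $H$, which requires knowing that the displacement function $x \mapsto d(x, \text{nearest nontrivial deck-translate of } x)$ on the segment $AA'$ attains the value $2h_i$ only at $H$; this follows by assembling the monotonicity of $f(t)$ from Lemma \ref{lem_xxi} together with the edge-distance estimates of Propositions \ref{prop_intersect_one_10} and \ref{prop_intersect_10}, but it must be checked that these cover \emph{all} deck-translates, not merely the ones appearing in Figure \ref{fig_4g_plus_2_gon_univ_2}, which is exactly what Proposition \ref{prop_intersect_10} guarantees for $g$ in the stated ranges.
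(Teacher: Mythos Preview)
Your proposal is correct and follows essentially the same approach as the paper: both establish that $\gamma_i$ is the \emph{unique} systolic geodesic meeting a given diameter $AA'$ by showing (via Lemma~\ref{lem_x_b1c4} and the non-intersection Lemmas~\ref{lem_xi_dia}--\ref{lem_xixj}) that the tangency $\partial X_1 \cap \partial X_1'$ occurs only when $x=H$, and then count diameters. Your orbit--stabilizer framing for the lower bound is slightly more explicit than the paper's (which leaves the injectivity of the diameter-to-systole map implicit in the equivariance), but the substance is the same.
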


\begin{proof}
		By the proof of Lemma \ref{lem_xi_dia}, Lemma \ref{lem_xxi} and Lemma \ref{lem_xixj}, each pair of components in Figure \ref{fig_4g_plus_2_gon_6} does not intersect except $X_1$ and $X_1'$. $X_1$ and $X_1'$ may be tangent to each other. For a fixed $x$ (center of the ball $B(x,h_i)$) on $OH$, the number of points in $\partial X_1 \cap \partial X_1'$ is $0$ or $1$ by the convexity of balls in hyperbolic plane. The only thing to show is $\partial X_1 \cap \partial X_1' \ne \emptyset$ if and only if $x$ is the point $H$. 

		This fact is straight forward by the proof of Lemma \ref{lem_x_b1c4}. 
		In that proof, the common perpendicular between $AA'$ and $B_1B_1'$ in Figure \ref{fig_4g_plus_2_gon_univ_2} is the segment connecting $H$ and $H$'s deck transformation image on $B_1B_1'$. Besides, $d(AA', B_1B_1') = h_i$. Therefore $d(x,B_1B_1')\ge h_i$, $\forall x\in OH$. The equality holds if and only if $x=H$. 
		It proves that in Figure \ref{fig_4g_plus_2_gon_6}, $X_1\cap AA' = \{H'\}$ and $X_1'\cap AA' = \{H'\}$ if and only if $x = H$. (Here $H'$ is the intersecting point of $E'D'$ and $AA'$ in Figure \ref{fig_4g_plus_2_gon_10}. ) 

		Then it proves that $\gamma_i$ ($DEE'D'$ in Figure \ref{fig_4g_plus_2_gon_10}) is the unique systole that intersects $OH$ in all of the three models. By Claim \ref{claim_final}, $\gamma_i$ is the unique systole that intersects $OA$ in $P_1$ and $P_2$. 
		By the invariance of $\gamma_i$ under the $\pi$-rotation of $P_1$ and $P_2$, $\gamma_i$ is the unique systole that intersects $AA'$ in $P_1$ and $P_2$. 
		This is equivalent to that given a diameter of $P_1$ or $P_2$, there is a unique systole intersects the diameter. Therefore by counting the number of diameters of $P_1$ and $P_2$, the Corollary holds. 

\end{proof}

\section{Appendix}
In this Section, we give the source code and figure for the comparison between $|CD|$ and $h_i$ in Lemma \ref{lem_dist_edges_10}. The code is written in MATLAB. The codes for other comparisons are similar. 

%\lstinputlisting{dist_between_two_edges.m}
\begin{lstlisting}

%P_1 model
a = [eps:eps:pi/8];
b = a;
%P_2 model
%a = [eps:eps:pi/10];
%b = 2*a;
%P_2^* model
%a = [eps:eps:pi/5];
%b = 0.5*a;

chdst = (2*(cos(a)./sin(b)).^2-1).*(sin(2*b)).^2-(cos(2*b)).^2;

chh = 1 + cos(2*a) + cos(2*b);
%shh = sqrt(chh.^2-1);
f = chdst - chh;
plot(a,f);
\end{lstlisting}

Figure \ref{fig_dist_between_two_edges} shows the result of the comparison in $P_1$ model. In Figure \ref{fig_dist_between_two_edges}, the horizontal axis is the variable $a$, while the vertical axis is $\cosh |CD| - \cosh h_i$ (depends on $a$). 

\begin{figure}[htbp]
		\centering
		\includegraphics[scale=0.6]{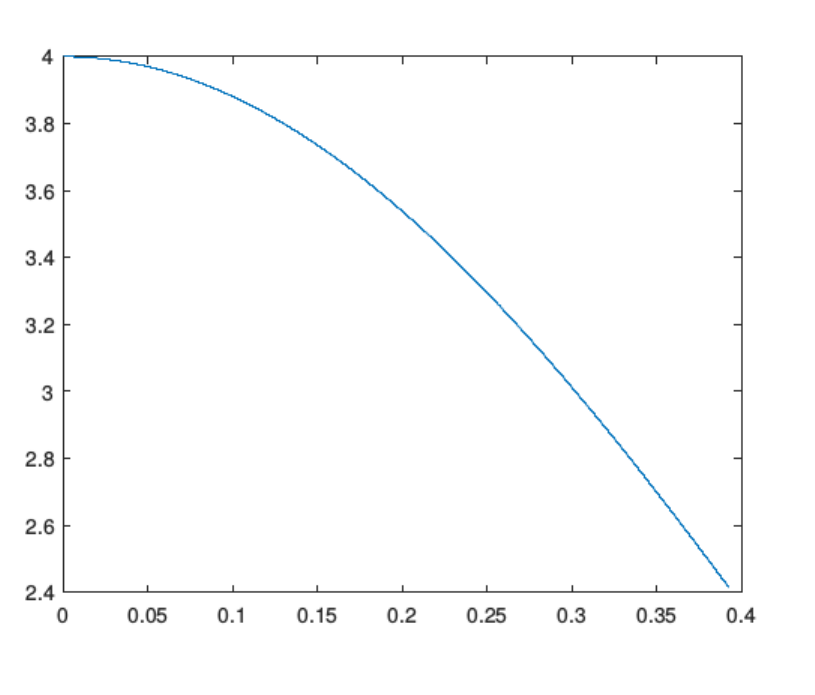}
		\caption{}
		\label{fig_dist_between_two_edges}
\end{figure}

\bibliographystyle{alpha}

\end{document}